\documentclass[11pt,oneside]{article}
\usepackage{amssymb,amsmath}
\usepackage{xcolor}
\usepackage{enumitem}

\usepackage[russian,english]{babel}
\usepackage[utf8]{inputenc}
\usepackage[T1,T2A]{fontenc}

\usepackage{url}
\renewcommand{\le}{\leqslant}
\renewcommand{\ge}{\geqslant}

\newcommand{\RR}{\mathbb{R}}
\newcommand{\ZZ}{\mathbb{Z}}
\newcommand{\QQ}{\mathbb{Q}}

\newcommand{\NN}{\mathbb{N}}

\newcommand{\PP}{\mathbb{P}}

\newcommand{\VVV}{\mathcal{V}}

\newcommand{\CCC}{\mathcal{C}}
\newcommand{\KKK}{\mathcal{K}}
\newcommand{\FFF}{\mathcal{F}}

\newcommand{\PPP}{\mathcal{P}}
\newcommand{\RRR}{\mathcal{R}}
\newcommand{\vR}{\mathbf{R}}
\newcommand{\MMM}{\mathcal{M}}

\newcommand{\III}{\mathcal{I}}

\newcommand{\vx}{\mathbf{x}}
\newcommand{\vu}{\mathbf{u}}
\newcommand{\vv}{\mathbf{v}}

\newcommand{\vp}{\mathbf{p}}
\newcommand{\vq}{\mathbf{q}}

\newcommand{\va}{\mathbf{a}}

\newtheorem{lemma}{Lemma}
\newtheorem{theorem}{Theorem}
\newtheorem{proposition}{Proposition}
\newtheorem{definition}{Definition}
\newtheorem{remark}{Remark}

\newtheorem{corollary}{Corollary}

\newenvironment{proof}{\textsc{Proof. }}{\ \newline\hspace*{\fill}$\boxtimes$}

\newcommand{\SL}{\mathrm{SL}}

\newcommand{\R}{\mathbb R}
\newcommand{\Z}{\mathbb Z}
\newcommand{\GL}{\operatorname{GL}}
\newcommand{\Disc}{\operatorname{Disc}}
\newcommand{\Hess}{\operatorname{Hess}}
\newcommand{\Ht}{\operatorname{H}}

\begin{document}

\title{Simultaneous Diophantine approximation on the three-dimensional Veronese curve: the complete Hausdorff dimension story}

\author{Dmitry Badziahin \and Nikita Shulga}

\date{}

% \author[Dmitry Badziahin]{Dmitry Badziahin}
% \address{Dzmitry Badziahin, School of Mathematics and Statistics, The University of Sydney, NSW, Australia}
% \email{dzmitry.badziahin@sydney.edu.au}

% \author[Nikita Shulga]{Nikita Shulga}
% \address{Nikita Shulga, Sydney Mathematical Research Institute, The University of Sydney, NSW, Australia}
% \email{nikita.shulga@sydney.edu.au}

%\thanks{The University of Sydney, dzmitry.badziahin@sydney.edu.au}

\maketitle

\begin{abstract}

We determine the Hausdorff dimension of the intersection of the set $\mathcal W_3(\lambda)$ of simultaneously
$\lambda$-well approximable points with the Veronese curve $\VVV_3 \subset \RR^3$
for $\lambda \ge3/5$, thus completing the full range of $\lambda$ values. Precisely, we show that for $\lambda\ge 1/3$,
$$
\dim\bigl(\mathcal W_3(\lambda)\cap\VVV_3\bigr)=
\max\left\{\frac{2-2\lambda}{1+\lambda},
\frac{2}{3(1+\lambda)}\right\}.
$$
To the best of the authors' knowledge, this makes $\VVV_3$ the first
nondegenerate, non-planar curve with a completely determined Hausdorff dimension theory.

\end{abstract}

{\footnotesize{{\em Keywords}: simultaneously well approximable
points, Veronese curve, Hausdorff dimension, simultaneous
Diophantine approximation on manifolds, cubic polynomials of bounded
discriminant

2020 Mathematics Subject Classification: 11J13, 11J54, 11J82, 11K55}}

\section{Introduction}

Let $\MMM\subset\RR^n$ be a smooth $m$-dimensional manifold. A central
problem in metric Diophantine approximation is to understand the points on $\MMM$ that admit simultaneous rational approximations
substantially better than those guaranteed by Dirichlet's theorem. For
$\lambda\ge 1/n$, put
$$
\mathcal W_n(\lambda):=
\left\{\vx\in\RR^n:
\|q\vx-\vp\|_\infty<q^{-\lambda}
\text{ for infinitely many }(q,\vp)\in\ZZ_{>0}\times\ZZ^n
\right\}.
$$
% The general problem considered here is to determine
% \begin{equation}\label{eq:general-dimension-problem}
% \dim\bigl(\MMM\cap\mathcal W_n(\lambda)\bigr),
% \qquad \lambda\ge\frac1n.
% \end{equation}
For so-called nondegenerate manifolds $\MMM$, the induced $m$-dimensional Lebesgue measure of
$\MMM\cap \mathcal W_n(\lambda)$ is understood through the work of Kleinbock
and Margulis~\cite{kle_mar_1998}, who proved the Sprind\v{z}uk conjecture.
%in particular the extremality of nondegenerate manifolds;
In the present notation, their result implies that $\MMM\cap\mathcal W_n(\lambda)$ has measure zero
for every $\lambda>1/n$. For general monotone approximation functions,
Beresnevich and Yang~\cite{ber_yan_2023} proved the convergence part of
Khintchine's theorem for arbitrary nondegenerate manifolds in $\RR^n$. Thus, beyond
the Dirichlet threshold, Hausdorff dimension is the natural measure of the size of the corresponding set. Here a manifold $\MMM\in \RR^n$ is called nondegenerate if the first several partial quotients of its parametrisation span the whole space $\RR^n$ at every point. For an analytic manifolds this condition essentially means that they do not belong to any proper linear subspace of $\RR^n$.

The dimension problem is considerably more delicate. The usual
``independence'' heuristic predicts
\begin{equation}\label{dim_heur}
\dim\bigl(\MMM\cap\mathcal W_n(\lambda)\bigr)
=
\frac{n+1}{1+\lambda}-(n-m).
\end{equation}
Beresnevich and Yang~\cite{ber_yan_2023} verified this equation in an
explicit neighbourhood of the Dirichlet exponent, and Schindler, Srivastava
and Technau~\cite{sst_2024} subsequently enlarged its size. In particular,
for a nondegenerate curve $\CCC\subset\RR^n$, their result gives
\begin{equation}\label{eq:sst-curve-range}
\dim\bigl(\CCC\cap\mathcal W_n(\lambda)\bigr)
=
\frac{2-(n-1)\lambda}{1+\lambda}
\end{equation}
whenever $\lambda$ lies within the interval
$$
\frac1n\le\lambda<
\frac{2n^2+n+2}{(n^2+n+1)(2n-1)}
=
\frac1n+
\frac{n+1}{n(2n-1)(n^2+n+1)}.
$$
Notice that its length is $\frac{1}{2n^3}+O(n^{-4})$.

Lower bounds are known in substantially wider ranges. Beresnevich, Lee,
Vaughan and Velani~\cite{blvv_2017} proved, for every $C^2$
$m$-dimensional manifold $\MMM\subset\RR^n$, that
\begin{equation}\label{eq:blvv-lower}
\dim\bigl(\MMM\cap\mathcal W_n(\lambda)\bigr)
\ge \frac{n+1}{1+\lambda}-(n-m),
\qquad
\frac1n\le\lambda<\frac{1}{n-m}.
\end{equation}
For analytic nondegenerate curves, Beresnevich~\cite{beresnevich_2012} proved a Jarn\'ik-type
divergence theorem which, in particular, yields the lower bound
\begin{equation}\label{eq:bvvz-curve-range}
\dim\bigl(\CCC\cap\mathcal W_n(\lambda)\bigr)
\ge \frac{2-(n-1)\lambda}{1+\lambda},
\qquad
\frac1n\le\lambda<\frac{3}{2n-1}.
\end{equation}
Beresnevich, Vaughan, Velani and
Zorin~\cite{bvvz_2021} subsequently removed the analyticity assumption and
proved the corresponding divergence theorem for arbitrary nondegenerate
curves. Writing $\tau_{n,1}$ for the supremum of the exponents up to which
\eqref{dim_heur} is expected to hold uniformly for nondegenerate curves in
$\RR^n$, Beresnevich and Yang~\cite{ber_yan_2023} conjectured that
\begin{equation}\label{eq:tau-curve-conjecture}
\tau_{n,1}=\frac{3}{2n-1}.
\end{equation}
% For curves in $\RR^3$, this threshold is $3/5$. Thus the general curve theory
% already provides the expected lower bound on $\VVV_3$ throughout
% $1/3\le\lambda<3/5$. The main difficulty is to obtain matching upper bounds
% and to determine what happens beyond this threshold.

The most natural model family is the Veronese curves
$$
\VVV_n:=\{(\xi,\xi^2,\ldots,\xi^n):\xi\in\RR\}\subset\RR^n,
$$
also commonly called the \emph{moment curves}. They are the simplest
polynomial examples of \emph{nondegenerate curves}; indeed, the first $n$ derivatives of their parametrisation span $\RR^n$ at every point. Consequently, they form a
canonical testing ground for the general theory, while their arithmetic
structure allows phenomena that are not captured by purely differential-geometric considerations.

Given
$\vp=(p_0,p_1,\ldots,p_n)\in\ZZ_{>0}\times\ZZ^n$, consider the inequality
\begin{equation}\label{main_ineq}
\max_{1\le i\le n}|p_0\xi^i-p_i|<p_0^{-\lambda},
\end{equation}
and define
$$
S_n(\lambda):=
\left\{\xi\in\RR:\eqref{main_ineq}
\text{ holds for infinitely many }
\vp\in\ZZ_{>0}\times\ZZ^n
\right\}.
$$
The set $S_n(\lambda)$ is the projection of $\VVV_n\cap\mathcal W_n(\lambda)$ onto its first coordinate. Hence, on every bounded interval, its Hausdorff dimension agrees
with that of $\VVV_n\cap\mathcal W_n(\lambda)$. For convenience, we will often work with one-dimensional $S_n(\lambda)$ instead of the points on the curve $\VVV_n$ itself.

% Thus $S_n(\lambda)$ is the parameter set of $\lambda$-well approximable
% points on $\VVV_n$; on every bounded interval its Hausdorff dimension agrees
% with that of $\VVV_n\cap\mathcal W_n(\lambda)$.  Sprind\v{z}uk's theorem~\cite{sprindzuk_1969}, together with Khintchine's transference principle, implies that the Lebesgue measure of $S_n(\lambda)$ is zero for all $\lambda>1/n$.  The problem is therefore to determine
% $$
% \dim S_n(\lambda),\qquad \lambda\ge\frac1n.
% $$

Even for this model family, complete results about $\dim (S_n(\lambda))$ are rare. For the parabola
$\VVV_2$, Beresnevich, Dickinson and Velani~\cite{bdv_2007} (with an
appendix by Vaughan) established the Hausdorff theory in the critical range,
while Budarina, Dickinson and Levesley~\cite{bdl_2010} treated sufficiently
small approximation functions. Together, these results give the complete
formula
$$
\dim S_2(\lambda)
= \max\left\{ \frac{2-\lambda}{1+\lambda}, \frac{1}{1+\lambda}\right\},\quad \lambda\ge\frac12.
% \begin{cases}
% \displaystyle\frac{2-\lambda}{1+\lambda},&\displaystyle\frac12\le\lambda\le1,\\[5pt]
% \displaystyle\frac{1}{1+\lambda},&\displaystyle\lambda\ge1.
% \end{cases}
$$
In higher dimensions, only partial ranges were previously known. The first author~\cite{badziahin_2025}
proved that
\begin{equation}\label{dim_ver}
\dim S_n(\lambda)=\frac{2-(n-1)\lambda}{1+\lambda} \quad\text{ for } \quad \frac{1}{n} \le\lambda\le \frac{2}{2n-1}
\end{equation} and obtained the wider range
$1/3\le\lambda\le1/2$ when $n=3$. In subsequent work~\cite{badziahin_2026}, the cubic range in~\eqref{dim_ver} was extended to
$$
\frac13\le\lambda\le\frac35.
$$
At the opposite end of the approximation range, Badziahin and
Bugeaud~\cite{bad_bug_2020} proved that
$$
\dim S_n(\lambda)=\frac{2}{n(1+\lambda)}
\qquad\text{for }\lambda\ge\frac{n+4}{3n}.
$$
Thus, for the cubic Veronese curve $\VVV_3$, the Hausdorff dimension of $S_3(\lambda)$ remained unknown on the interval $\frac35<\lambda<\frac79$.
%remained between the two ranges in which the exact dimension was known.

Our main result closes this gap and determines the Hausdorff dimension of the
well-approximable points on $\VVV_3$ for every approximation exponent. To
the best of our knowledge, this yields the first complete dimension law for a
nondegenerate, non-planar curve.

\begin{theorem}\label{th1}
For every $\lambda\ge \frac13$,
$$
\dim S_3(\lambda)
= \max\left\{\frac{2-2\lambda}{1+\lambda}, \frac{2}{3(1+\lambda)}\right\}.
% \begin{cases}
% \displaystyle \frac{2-2\lambda}{1+\lambda},
% & \displaystyle \frac13\le\lambda\le\frac23,\\[6pt]
% \displaystyle \frac{2}{3(1+\lambda)},
% & \displaystyle \lambda\ge\frac23.
% \end{cases}
$$
\end{theorem}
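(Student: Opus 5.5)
By the results already quoted in the introduction, the formula in Theorem~\ref{th1} is known for $\frac13\le\lambda\le\frac35$ (from \cite{badziahin_2026}) and for $\lambda\ge\frac79$ (from \cite{bad_bug_2020}, since $\frac{n+4}{3n}=\frac79$ when $n=3$). It therefore remains to treat $\frac35<\lambda<\frac79$. The two branches of the maximum cross exactly at $\lambda=\frac23$: there $\frac{2-2\lambda}{1+\lambda}=\frac{2}{3(1+\lambda)}=\frac25$.

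\textbf{Lower bound.} I will treat the two branches separately.

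The branch $\frac{2}{3(1+\lambda)}$ comes from rational points. If $\xi=a/b$ with $q=b^3$, then $p_i=a^ib^{3-i}$ approximate $\xi^i$ exactly. So for $\xi$ near $a/b$, inequality \eqref{main_ineq} holds once $|\xi-a/b|\ll b^{-3}\cdot b^{-3\lambda}$. A standard Jarník-type mass transference argument over rationals $a/b$, with radius $b^{-3(1+\lambda)}$, then gives dimension $\frac{2}{3(1+\lambda)}$.

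For the branch $\frac{2-2\lambda}{1+\lambda}$ with $\frac35<\lambda\le\frac23$, I would not rely on \eqref{eq:bvvz-curve-range}, since its range stops at $\frac35$. Instead I would use the structure of $\VVV_3$ directly, following the construction of \cite{badziahin_2026}. The resonant sets are points $\xi$ close to roots of integer cubic polynomials whose discriminant is small compared with their height. The plan is to count such points via the count of cubic polynomials of bounded discriminant, and then check that they still form a ubiquitous system of the right density up to $\lambda=\frac23$. Mass transference then yields the bound $\frac{2-2\lambda}{1+\lambda}$.

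\textbf{Upper bound.} This is the main obstacle. Given a solution $\vp$ of \eqref{main_ineq} with $p_0=q$, I would split according to the rank of the lattice of integer relations among $(p_0,\dots,p_3)$.

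If the vector $\vp$ is proportional to $(b^3,ab^2,a^2b,a^3)$, then $\xi$ is near the rational $a/b$. Covering these points gives exponent $\frac{2}{3(1+\lambda)}$.

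Otherwise, the quadratic relations $p_0p_2-p_1^2$, $p_1p_3-p_2^2$ and $p_0p_3-p_1p_2$ do not all vanish. Their smallness, which is of order $q^{1-\lambda}$, places $\xi$ near a root of an integer quadratic or cubic polynomial $P$ with controlled height $H$ and discriminant $D$. I would then cover, for each dyadic range of $(H,D)$, the corresponding intervals. The number of such intervals is bounded using counts of cubic polynomials of bounded discriminant (the tool used in \cite{badziahin_2026}), and their lengths come from $|P(\xi)|$ and $|P'(\xi)|$.

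The delicate step is to optimise this covering so that the resulting exponent never exceeds $\max\{\frac{2-2\lambda}{1+\lambda},\frac{2}{3(1+\lambda)}\}$ throughout $\frac35<\lambda<\frac79$. I expect this to require refining the discriminant counts in the regime where $D$ is very small, where the polynomials are nearly reducible, and handling that regime by reducing to the rational case.
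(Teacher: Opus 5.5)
\textbf{There is a genuine gap.} Your reduction to $\frac35<\lambda<\frac79$ is fine, and so is your treatment of the branch $\frac{2}{3(1+\lambda)}$ in both directions (Jarník around rationals $u/v$ with radius $v^{-3(1+\lambda)}$). But the two steps that carry the content are left as plans, and the upper-bound plan does not close as described.

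\textbf{Upper bound.} Your dichotomy ``$\vp$ proportional to $(v^3,v^2u,vu^2,u^3)$ or not'' is the wrong one, because among the non-Veronese approximants there are far too many with $\Disc(F_\vp)=0$. For instance, $\vp=(q,p,0,0)$ with $Q\le q<2Q$ and $|p|<q^{(1-\lambda)/2}$ satisfies \eqref{main_ineq} at $\xi=p/q$. This gives $\gg Q^{(3-\lambda)/2}$ approximants per dyadic block, which exceeds $Q^{2-2\lambda}$ for every $\lambda>\frac13$. Covering each by its own interval of length $Q^{-1-\lambda}$ only gives the exponent $\frac{3-\lambda}{2(1+\lambda)}$, which is larger than both branches on $(\frac35,\frac79)$. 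These cubics $X^2(qX+3pY)$ have discriminant exactly $0$, not ``very small'', so refining counts for small $D$ does not help.

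The missing idea is to split by $\Disc(F_\vp)=0$ versus $\Disc(F_\vp)\neq0$, i.e.\ into \emph{special} points lying in a tangent lattice $L_{P_{u,v}}\cap\ZZ^4$ and \emph{generic} ones.
\begin{itemize}
\item For special points with $\Hess(F_\vp)\not\equiv0$ you should not count individually. Write $\vp=k_1\vq_1+k_2\vq_2$. The bound $|\Delta_0|=k_2^2v^2\ll p_0^{1-\lambda}$ forces $|\xi-u/v|\ll v^{-(1+\lambda)/(1-\lambda)}$. So the whole cluster on $L_{P_{u,v}}$ is covered by one interval around $u/v$, and Jarník gives $\frac{2-2\lambda}{1+\lambda}$.
\item For generic points you need a loss-free count $\ll Q^{2-2\lambda}$ per dyadic block. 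A secondary term such as $Q^{4/3+\varepsilon}$ from general curve counts already exceeds this for $\lambda>\frac13$.
\end{itemize}
The paper obtains the generic count as follows. The Hessian coefficients are $\ll Q^{1-\lambda}=:M$, so $0<|\Disc(F_\vp)|\ll M^2$. Davenport--Hooley give $O(D)$ classes with $|\Disc|\asymp D$. Within a class, the first column $\vu$ of the transforming matrix satisfies $|F_0(\vu)|\asymp Q$ and $|\Hess(F_0)(\vu)|\le M$. After reducing to $XY(X-Y)$ or $X^3+Y^3$ and counting primitive lattice points in sectors, this allows only $O(1+M|d|^{-1/2})$ forms. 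Summing over dyadic $D\ll M^2$ gives $O(M^2)$.

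\textbf{Lower bound.} For the branch $\frac{2-2\lambda}{1+\lambda}$ on $(\frac35,\frac23]$ you give no family of approximants, only the hope that the construction of \cite{badziahin_2026} still yields a ubiquitous system. The approximants that work are again special. The point $\vp(k)=k\vq_1+\vq_2\in L_{P_{u,v}}$ has $p_1/p_0=\frac uv+\frac1{vr_k}$ and errors $\ll 1/r_k$ in the other coordinates. This is $<p_0^{-\lambda}$ once $k\gg v^{(3\lambda-1)/(1-\lambda)}$.

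The resulting intervals are not a system to which standard ubiquity or mass transference applies:
\begin{itemize}
\item they sit in clusters of length $\asymp V^{-2-\gamma}$ around each $u/v$;
\item inside a cluster their spacing is $\asymp V^{-2-2\gamma}$;
\item this spacing is much larger than their radius $\asymp V^{-(3+\gamma)(1+\lambda)}$.
\end{itemize}
One needs a Cantor construction with a mass-distribution estimate across these three scales. It yields dimension $\min\{\frac{2}{2+\gamma},\frac{\gamma+2}{(\gamma+3)(1+\lambda)}\}$. At $\gamma=\frac{3\lambda-1}{1-\lambda}$ the second term equals $\frac12$ and the first equals $\frac{2-2\lambda}{1+\lambda}$, which is the smaller of the two exactly when $\lambda\ge\frac35$. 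This is why the threshold $\frac35$ is where this mechanism takes over.
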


Theorem~\ref{th1} exhibits a phase transition at $\lambda=2/3$. Up to this
point, the dimension agrees with the value predicted by the general manifold
heuristic~\eqref{dim_heur} for a curve in $\RR^3$; beyond $2/3$, a special
arithmetic concentration of rational points along tangent subspaces becomes
dominant. This is particularly notable in view of the range for $\lambda$ in~\eqref{eq:bvvz-curve-range}.
The general curve theory provides the generic lower bound for
$1/3\le\lambda<3/5$, and Badziahin~\cite{badziahin_2026} proved the matching
upper bound through $\lambda=3/5$, whereas Theorem~\ref{th1} shows that the
same dimension law persists all the way to $2/3$. Thus the conjectural uniform
threshold $3/5$ in~\eqref{eq:tau-curve-conjecture} is not a transition point for this curve. Our theorem fills the previously open interval
$3/5<\lambda<7/9$ and identifies the mechanism by which the Veronese curve
eventually ceases to follow the generic codimension law.
% To the best of our
% knowledge, $\VVV_3$ is the first nondegenerate, non-planar curve for which the
% Hausdorff dimension of the simultaneously well-approximable points is known
% throughout the full range $\lambda\ge1/3$.

A second theme of the paper is a closely related problem of counting
rational points near manifolds. For a bounded $m$-dimensional manifold
$\MMM\subset\RR^n$, $Q\ge2$ and $0<\delta<1/2$, define
$$
N_{\MMM}(Q,\delta)
:=\#\left\{(q,\vp)\in\ZZ^{n+1}:1\le q\le Q,
\inf_{\vx\in\MMM}\|q\vx-\vp\|_\infty\le\delta\right\}.
$$
This counting problem for $N_{\MMM}(Q,\delta)$ has a long history. The random-point heuristic suggests
\begin{equation}\label{nm_heur}
N_{\MMM}(Q,\delta)\ll Q^{m+1}\delta^{n-m}.
\end{equation}
%for a suitable error term $E(Q,\delta)$.
 Huxley~\cite{huxley_1994} obtained an early fundamental result for planar curves,
and Vaughan and Velani~\cite{vau_vel_2006} later proved, for compact $C^3$
planar curves $\CCC$ with non-vanishing curvature, the essentially optimal
estimate
$$
N_{\CCC}(Q,\delta)\ll Q^2\delta+Q^{1+\varepsilon}.
$$
In higher dimensions, Beresnevich, Vaughan, Velani and
Zorin~\cite{bvvz_2017} obtained essentially best possible upper bounds under
their dimensional and nondegeneracy hypotheses. Huang~\cite{huang_2020}
proved a sharp asymptotic formula for rational points near hypersurfaces with
non-vanishing Gaussian curvature, while Srivastava~\cite{srivastava_2025}
treated non-isotropic neighbourhoods of higher-codimension manifolds under a
strong curvature condition. These results illustrate how sensitive the counting problem is to the
geometry of the manifold.

For the Veronese curves, the heuristic estimate~\eqref{nm_heur} cannot hold
uniformly over all rational points. Rational subspaces tangent to the curve
may contain an anomalously large number of nearby integer points. For example, points of the form
$$
(q,p,0,\ldots,0),\qquad Q/2\le |q|\le Q,
\qquad |p|\le (Q\delta/2)^{1/2},
$$
already contribute
\begin{equation}\label{eq:tangent-obstruction}
N_{\VVV_n}(Q,\delta)\gg Q^{3/2}\delta^{1/2}
>Q^2\delta^{n-1}
\qquad\text{whenever }\delta<Q^{-1/(2n-3)}.
\end{equation}
For the cubic curve $\VVV_3$, setting $\delta=Q^{-\lambda}$ shows that the tangent contribution exceeds the heuristic count for every $\lambda>1/3$, that is, throughout the entire
nontrivial Diophantine range. The main observation of the present paper is
that for $\VVV_3$, all rational points responsible for this excess can be
geometrically isolated.

Given $\va=(a_0,\ldots,a_m)\in\ZZ^{m+1}\setminus\{\mathbf 0\}$, $m\le n$, let $L_\va$ denote the
$m$-dimensional homogeneous subspace of $\RR^{n+1}$ defined by
\begin{equation}\label{def_la}
a_0x_j+a_1x_{j+1}+\cdots+a_mx_{j+m}=0,
\qquad 0\le j\le n-m.
\end{equation}
We identify $\va$ with the polynomial
$P_\va(x)=a_0+a_1x+\cdots+a_mx^m$ and also write $L_P$ for $L_\va$.
For a reduced rational number $u/v$, put
$$
P_{u,v}(x):=(vx-u)^2.
$$

\begin{definition}
A point $\vp=(p_0,p_1,p_2,p_3)\in\ZZ^4$ is called \textbf{special} if
$\vp\in L_{P_{u,v}}$ for some coprime $(u,v)\in\ZZ^2$. All other integer
points are called \textbf{generic}.
\end{definition}

Each $L_{P_{u,v}}$ is a two-dimensional homogeneous subspace associated
with the tangent line to $\VVV_3$ at the rational point with parameter
$u/v$. Once these tangent configurations are removed, the heuristic
counting bound is restored without loss. More precisely, fix a compact
interval $I\subset\RR$ and, in the counting statements, write
$$
\VVV_3=\{(\xi,\xi^2,\xi^3):\xi\in I\}.
$$
For $Q\ge2$ and $0<\delta<1/2$, define
\begin{align*}
N_{g,\VVV_3}(Q,\delta)
&:=\#P_g(Q,\delta)\\
&:=\#\bigg\{\vp\in\ZZ^4:\ 1\le p_0\le Q,\quad \vp\text{ is generic},
\inf_{\vx\in\VVV_3}\max_{1\le i\le3}|p_0x_i-p_i|\le\delta\bigg\}.
\end{align*}
The dependence on $I$ is suppressed in the notation. The next theorem is our main counting input, and it agrees with the heuristic~\eqref{nm_heur}.

\begin{theorem}\label{th2}
Let $I\subset\RR$ be compact. Uniformly for $Q\ge2$ and
$0<\delta<1/2$,
\begin{equation}\label{th2_eq}
N_{g,\VVV_3}(Q,\delta)\ll_I Q^2\delta^2.
\end{equation}
\end{theorem}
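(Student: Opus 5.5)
Throughout, work with integer vectors $\vp=(p_0,p_1,p_2,p_3)$ with $p_0\le Q$ lying within $\delta$ of $p_0\VVV_3$, and encode each one by the integer quadratic polynomials vanishing on it. The starting observation is that $\vp$ determines three $2\times 2$ minors,
$$A=p_0p_2-p_1^2,\quad B=p_0p_3-p_1p_2,\quad C=p_1p_3-p_2^2 .$$
One checks directly that $\vp\in L_{C x^2 - Bx + A}$, i.e. $\vp\in L_P$ for the quadratic $P(x)=A-Bx+Cx^2$. We may assume $p_0\asymp Q$ by a dyadic decomposition; the range $p_0\le Q^{1/2}$ or so is trivial. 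If $\vp$ lies near $p_0(\xi,\xi^2,\xi^3)$, then $|A|,|B|,|C|\ll Q\delta$. Moreover the discriminant satisfies $B^2-4AC=0$ exactly when $P$ is (up to a constant) a square $(vx-u)^2$, i.e. when $\vp$ is special; the degenerate case $A=B=C=0$ means $\vp$ is a rational point on the cone of $\VVV_3$, hence also special, with $u/v=p_1/p_0$. So for generic $\vp$ we obtain a nonzero integer quadratic $P$ of height $H(P)\ll Q\delta$ with nonzero discriminant.

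The plan is to count pairs $(P,\vp)$. First I would classify generic points by $H=H(P)\asymp 2^k$, $1\le H\ll Q\delta$. (If $H<1$ is impossible, every generic point has $H\ge1$.) For fixed $P$ with distinct roots, $L_P\cap\ZZ^4$ is a rank-two lattice, spanned essentially by the "moment vectors" of the roots. The points of $L_P$ within $\delta$ of the curve cone lie near $p_0(\alpha,\alpha^2,\alpha^3)$ for a root $\alpha$ of $P$, with $\xi$ close to $\alpha$. Concretely I would show that $|P(\xi)|\ll \delta/Q$ with suitable derivative control. This reduces the problem to: (i) counting lattice points of $L_P$ in a box of size $Q\times(\delta\text{-thin})$ around the line through $(1,\alpha,\alpha^2,\alpha^3)$, which by covolume gives roughly $1+ Q\delta\cdot(\text{something})/\operatorname{covol}(L_P\cap\ZZ^4)$; and (ii) counting the number of $P$ of height $\asymp H$ having a root in $I$ that is very close to $\xi$ — quadratic polynomials of bounded height and discriminant. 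The covolume of $L_P\cap\ZZ^4$ should be comparable to $H(P)^{?}|\Disc P|^{1/2}$-type quantities. So I would stratify further by $D=|\Disc(P)|\ge1$ and use the standard count of integer quadratics with height $\le H$ and discriminant $\asymp D$, which is roughly $H D^{1/2}$ up to logs. That count has to be done carefully without logs, using the paper's "cubic polynomials of bounded discriminant" machinery in its quadratic analogue.

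Summing: for each $(H,D)$ the contribution should be $\ll (\#P)\cdot(1+Q\delta/\text{covol})$. The main term $Q\delta/\text{covol}$ summed against $\#P$ should telescope to $Q^2\delta^2$, reflecting that the total volume is the heuristic. The "$+1$" terms contribute $\#\{P: H(P)\ll Q\delta\}\ll (Q\delta)^3$. This is too big, so the $+1$ term has to be avoided: a nonzero lattice point with $p_0\le Q$ must exist, which forces $\text{covol}\ll Q\delta$ in the relevant sense and restricts the admissible $P$ to those whose roots are rather well approximated. I would handle this by counting pairs rather than $P$ alone, using that $\vp$ itself determines $P$: $\vp$ lies in a thin region and its minors are small.

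The main obstacle is precisely this bookkeeping when the lattice $L_P\cap\ZZ^4$ is very skewed, i.e. when $D$ is small relative to $H^2$ — the near-special regime where $P$ is close to a square. There the contribution degenerates toward the tangent obstruction count $Q^{3/2}\delta^{1/2}$, and one must show that the discriminant being a nonzero integer ($D\ge1$) costs exactly enough. I would attack it by parametrising near-square $P=(vx-u)^2+\epsilon$-perturbations via the closest rational $u/v$ and counting in the coordinates $(u,v,D)$, checking the sum converges geometrically in both $D$ and $H$. The uniformity in all $\delta<1/2$ requires also treating $Q\delta<1$, where $H\ge1$ is impossible, so no generic points exist and the bound is trivial.
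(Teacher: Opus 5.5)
\textbf{There is a genuine gap: the proposal never carries out the core count, and the route you sketch fails at the step you yourself flag.}

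Your opening reduction matches the paper. The minors $A,B,C$ are the Hessian coefficients $\Delta_0,\Delta_1,\Delta_2$ of $F_\vp$, they are $\ll Q\delta$ by Lemma~\ref{lem:approx-window}, and $\vp$ is generic exactly when $B^2-4AC\neq0$ (Proposition~\ref{prop:special-hessian-correspondence}). One slip: the Hankel kernel is $(C,-B,A)$, so $\vp\in L_P$ with $P(x)=C-Bx+Ax^2$, not $A-Bx+Cx^2$.

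The failing step is this. A generic $\vp$ has a rank-two Hankel matrix, so it lies in exactly one $L_P$ with $P$ primitive. Hence ``counting pairs $(P,\vp)$'' is just counting $\vp$ again and gains nothing. With a per-$P$ bound of the form $1+\mathrm{area}/\mathrm{covolume}$, the constant terms sum over $\asymp(Q\delta)^3$ primitive quadratics of height $\ll Q\delta$. To reach $Q^2\delta^2$ you need an input showing that only few $P$ carry any admissible point. Nothing in the proposal supplies this; the near-square paragraph is a plan, not an argument.

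Several intermediate estimates are also wrong:
\begin{itemize}
\item For primitive $P=a_0+a_1x+a_2x^2$, the covolume of $L_P\cap\ZZ^4$ equals that of $L_P^\perp\cap\ZZ^4$, which is spanned by $(a_0,a_1,a_2,0)$ and $(0,a_0,a_1,a_2)$. So it is $\asymp H(P)^2$, independently of $\Disc P$; the discriminant only affects the shape of the lattice.
\item The number of quadratics of height $\asymp H$ with $|\Disc|\asymp D$ is of order $HD$, not $HD^{1/2}$. For $D\asymp H^2$ it must be $\asymp H^3$.
\item The range $p_0\le Q^{1/2}$ is not trivial once $\delta<Q^{-1/2}$. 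It has to go through the same dyadic estimate.
\end{itemize}

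The missing idea, which is how the paper proceeds, is to group generic points by the $\GL_2(\ZZ)$-class of the cubic $F_\vp$ rather than by its Hessian direction.
\begin{itemize}
\item By Davenport's theorem with Hooley's count for reducible forms (Theorem~\ref{thm:davenport}), there are only $O(D)$ classes with $0<|\Disc F|\le D$. This is far fewer than the roughly $MD$ primitive quadratics of height $\le M$ and discriminant $\asymp D$, so a ``$+1$'' per family becomes affordable.
\item Inside a class with representative $F_0$ and $H_0=\Hess(F_0)$, every $F=F_0\circ g$ has $q_0=F_0(\vu)$ and $\Delta_0=H_0(\vu)$, where $\vu$ is the primitive first column of $g$.
\item Reducing $F_0$ to $XY(X-Y)$ or $X^3+Y^3$ and covering $\{Q\le|F_0|<2Q,\ |H_0|\le M\}$ by sectors bounds the number of such $\vu$ by $\ll 1+M/|d|^{1/2}$ (Lemmas~\ref{lem:sector-cover-count}, \ref{lem:model-sector-cover}, \ref{lem:arch-slice}).
\item The condition $q_1/q_0\in I$ leaves only $O_I(1)$ choices of second column (Lemma~\ref{lem:class-slice}).
\end{itemize}
Summing $D+MD^{1/2}$ over dyadic $D\ll M^2$ with $M\asymp Q\delta$ gives $Q^2\delta^2$ for primitive points in one dyadic block (Proposition~\ref{prop:nondeg-count}). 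Primitivity and the dyadic restriction are then removed by a convergent summation.
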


% Its proof exploits a feature specific to the
% cubic Veronese curve.  To
% $\vp=(p_0,p_1,p_2,p_3)\in\ZZ^4$ we associate the binomially normalised binary
% cubic
% $$
% F_\vp(X,Y)=p_0X^3+3p_1X^2Y+3p_2XY^2+p_3Y^3.
% $$
% The special points are precisely those for which $F_\vp$ has zero
% discriminant, equivalently those for which the Hessian covariant is
% degenerate; see Proposition~\ref{prop:special-hessian-correspondence}.
% Moreover, proximity to $\VVV_3$ imposes strong restrictions on the Hessian
% coefficients; see Lemma~\ref{lem:approx-window}.  The counting problem is
% thereby reduced to a lattice problem for nondegenerate binary cubics with
% restricted Hessian coefficients.  Establishing the resulting uniform bound
% is the principal technical part of the proof of Theorem~\ref{th2}.

\begin{remark}
Very recently, Chen et al.~\cite{chen_seeger_srivastava_technau_2026}
proved counting estimates for rational points near space curves in $\RR^3$.
Applied to the cubic Veronese curve, their result gives
$$
N_{\VVV_3}(Q,\delta)\ll_\varepsilon
Q^2\delta^2+Q^{4/3+\varepsilon}.
$$
For $\delta=Q^{-\lambda}$ and $\lambda\ge 1/3$, the error term
dominates and in view of the remark~\eqref{eq:tangent-obstruction}, rational points in the tangent subspace of $\VVV_3$ at zero already contribute more than $Q^2\delta^2$. This is why we split rational points into two families: generic and special.  Theorem~\ref{th2} shows that for generic points the main term dominates throughout the whole range of $\delta$.
% Very recently, Chen et.al.~\cite{chen_seeger_srivastava_technau_2026}
% proved counting estimates for rational points near space curves in $\RR^3$.
% Applied to the cubic Veronese curve, their result gives
% $$
% N_{\VVV_3}(Q,\delta)\ll_\varepsilon
% Q^2\delta^2+Q^{4/3+\varepsilon}.
% $$
% For $\delta=Q^{-\lambda}$ and any fixed $\lambda\ge1/3$, $Q^2\delta^2$
% is not the main term, which is expected in view of the remark~\eqref{eq:tangent-obstruction}. This is the exact reason we split rational points into two families: generic and special. After excluding the special tangent configuration, Theorem~\ref{th2}
% recovers the heuristic bound $Q^2\delta^2$ for the generic points in the range $\lambda\ge1/3$
% relevant for the Hausdorff dimension problem.
\end{remark}

Theorem~\ref{th2} immediately yields the required Hausdorff-dimension
upper bound for the generic contribution. For an interval $I\subset\RR$,
write $S_3(I,\lambda)=S_3(\lambda)\cap I$ and set
$$
S_g(I,\lambda):=\left\{x\in I:
\max_{1\le i\le3}|p_0x^i-p_i|<p_0^{-\lambda}
\text{ for infinitely many generic }\vp\in\ZZ_{>0}\times\ZZ^3\right\}.
$$

\begin{corollary}\label{cor:generic-upper}
For every interval $I\subset\RR$ and every $\lambda\ge\frac13$,
\begin{equation}\label{eq:generic-upper}
\dim S_g(I,\lambda)
\le
\max\left\{\frac{2-2\lambda}{1+\lambda},0\right\}.
\end{equation}
\end{corollary}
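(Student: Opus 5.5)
We derive the bound by the standard Hausdorff–Cantelli covering argument, using Theorem~\ref{th2} as the counting input. We may assume $I$ is compact, since $S_g(I,\lambda)$ is a countable union of sets $S_g(I_k,\lambda)$ with $I_k$ compact and Hausdorff dimension is countably stable. For each generic $\vp$ with $p_0\ge1$, set
$$
\sigma(\vp):=\{x\in I:\ \max_{1\le i\le3}|p_0x^i-p_i|<p_0^{-\lambda}\}.
$$
The first coordinate already gives $|x-p_1/p_0|<p_0^{-1-\lambda}$. Hence $\sigma(\vp)$ is contained in an interval of length at most $2p_0^{-1-\lambda}$, and it is nonempty only if $\vp$ satisfies the defining condition of $P_g(Q,p_0^{-\lambda})$ for every $Q\ge p_0$. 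By definition, $S_g(I,\lambda)=\limsup_{\vp}\sigma(\vp)$ over generic $\vp$.

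Next, group the points dyadically: for $k\ge1$, let $A_k$ be the set of generic $\vp$ with $2^{k-1}\le p_0<2^k$ and $\sigma(\vp)\neq\emptyset$. Each such $\vp$ satisfies $\inf_{\vx\in\VVV_3}\max_i|p_0x_i-p_i|\le p_0^{-\lambda}\le 2^{(1-k)\lambda}$. Applying Theorem~\ref{th2} with $Q=2^k$ and $\delta=2^{(1-k)\lambda}$ gives
$$
\#A_k\ll_I 2^{2k}\,2^{-2k\lambda}.
$$
We need $\delta<1/2$ here, which holds for $k$ large when $\lambda>0$, and only the tail in $k$ matters. Each $\sigma(\vp)$ with $\vp\in A_k$ has diameter $\ll 2^{-k(1+\lambda)}$.

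For every $K$, the family $\{\sigma(\vp):\vp\in A_k,\ k\ge K\}$ covers $S_g(I,\lambda)$. So for $s>0$,
$$
\mathcal H^s_\infty\bigl(S_g(I,\lambda)\bigr)\ll\sum_{k\ge K}2^{k(2-2\lambda)}2^{-ks(1+\lambda)},
$$
and this tail tends to $0$ as $K\to\infty$ whenever $s(1+\lambda)>2-2\lambda$. The covering diameters also tend to zero, so the $s$-dimensional Hausdorff measure vanishes for every $s>\max\{(2-2\lambda)/(1+\lambda),0\}$. This gives~\eqref{eq:generic-upper}. For $\lambda\ge1$ the same sum converges for all $s>0$, which yields dimension $0$.

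No step is a genuine obstacle: the corollary is a direct consequence of Theorem~\ref{th2}. The only points needing care are that Theorem~\ref{th2} counts points near $\VVV_3$ over the same compact $I$, and that the uniformity in $\delta$ covers every dyadic block.
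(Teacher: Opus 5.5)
Your proposal is correct and follows essentially the paper's argument. Both proofs decompose dyadically in $p_0$, bound the number of generic points in the $k$-th block by $\ll 2^{k(2-2\lambda)}$ (you via Theorem~\ref{th2} with $Q=2^k$, $\delta\asymp 2^{-k\lambda}$; the paper via the dyadic Lemma~\ref{lem:remove-primitivity-count}), cover each resonant set by an interval of length $\ll 2^{-k(1+\lambda)}$ using the first coordinate, and apply Hausdorff--Cantelli. The only difference is cosmetic: for $\lambda>1$ the paper observes that the blocks are eventually empty, whereas your convergent sum already gives dimension $0$ for all $\lambda\ge1$.
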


Special points require a different argument. Although they are too
numerous to be covered by Theorem~\ref{th2}, their location on the tangent subspaces
$L_{P_{u,v}}$ provides an explicit parametrisation, allowing the corresponding limsup set to be analysed directly. Let
$$
S_s(\lambda):=
\left\{\xi\in\RR:
\max_{1\le i\le3}|p_0\xi^i-p_i|<p_0^{-\lambda}
\text{ for infinitely many special }\vp\in\ZZ_{>0}\times\ZZ^3\right\}.
$$

\begin{theorem}\label{th3}
For $\lambda\ge \frac35$ one has
$$
\dim S_s(\lambda)
=
\max\left\{\frac{2-2\lambda}{1+\lambda},
\frac{2}{3(1+\lambda)}\right\}.
$$
\end{theorem}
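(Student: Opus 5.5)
The plan is to use the explicit arithmetic structure of special points to reduce $S_s(\lambda)$ to a two-parameter limsup set of intervals around rationals, and then prove the upper and lower bounds separately.

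\emph{Parametrisation.} For coprime $(u,v)$, the subspace $L_{P_{u,v}}$ is the space of sequences $(x_0,\dots,x_3)$ satisfying a linear recurrence with double root $u/v$. Its integer points are, up to a sublattice of bounded index, of the form
$$\vp=a\,(v^3,uv^2,u^2v,u^3)+b\,(0,v^2,2uv,3u^2),\qquad a,b\in\ZZ .$$
Hence $p_0=av^3$, and $\vp/p_0$ is the point of the tangent line to $\VVV_3$ at $r=u/v$ with tangent parameter $t=b/(av)$. Write $\xi=r+s$. Expanding $p_0\xi^i-p_i$ shows that \eqref{main_ineq} (with $n=3$) is equivalent, up to constants, to the pair of conditions
$$|s-t|\ll (av^3)^{-1-\lambda},\qquad |s|\ll (av^3)^{-(1+\lambda)/2}.$$
The first condition comes from the linear terms and the second from the quadratic terms $p_0s^2$. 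Thus $\xi$ lies within $(av^3)^{-1-\lambda}$ of the rational $(auv+b)/(av^2)$, which itself must lie in the window $|s|\ll (av^3)^{-(1+\lambda)/2}$ around $u/v$. Since $S_s(\lambda)$ is invariant under integer translations, we may work in $[0,1]$ with $0\le u\le v$.

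\emph{Lower bound.} There are two constructions.
\begin{enumerate}[label=(\roman*)]
\item Taking $a=1$, $b=0$ gives the condition $|\xi-u/v|\ll v^{-3(1+\lambda)}$. By Jarn\'ik--Besicovitch this set has dimension $2/(3(1+\lambda))$.
\item For the term $(2-2\lambda)/(1+\lambda)$, I would fix the scale $H$ and let $v\asymp H^{\alpha}$ and $a\asymp H^{\beta}$ vary, with $b$ ranging over the admissible window. One then checks that the rationals $(auv+b)/(av^2)$ form a ubiquitous system with respect to the natural weights, with spacing $1/(av^2)$ larger than the radius $(av^3)^{-1-\lambda}$.
\end{enumerate}
Applying the mass transference principle, or equivalently a Cantor-type construction, and optimising over $\alpha,\beta$ should yield $(2-2\lambda)/(1+\lambda)$. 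As a sanity check, at $\lambda=3/5$ this value must coincide with the general lower bound \eqref{eq:bvvz-curve-range}.

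\emph{Upper bound.} This is the main obstacle. The naive Hausdorff--Cantelli sum over $(u,v,a,b)$ of $(av^3)^{-(1+\lambda)\sigma}$ is too lossy. Summing over $a$ alone already forces $\sigma>(3-\lambda)/(2(1+\lambda))$, which exceeds the target. I would therefore use a scale-dependent cover that splits into two regimes.
\begin{itemize}
\item If $av\,(av^3)^{-(1+\lambda)/2}\ll 1$, the window contains only $b=0$. Non-primitive multiples reduce to $a=1$, so these contribute at most the set in (i).
\item Otherwise, $a$ is at least $a_0(v)\asymp v^{(1+3\lambda)/(1-\lambda)}$. For fixed $(u,v)$, I would change variables to $y=vs$, which must lie within $v^{-2-3\lambda}a^{-1-\lambda}$ of $b/a$, subject to $|y|\ll v(av^3)^{-(1+\lambda)/2}$.
\end{itemize}
In the second regime the key step is to group the approximations by the reduced fraction $b/a$ rather than by the triple $(a,b,v)$. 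The same point $\xi$ is counted for many non-reduced pairs, and the window constraint then cuts $a$ to a dyadic range comparable to $H$. I would then cover, at each dyadic scale, either by the individual intervals or by the whole window around $u/v$, whichever is cheaper. Optimising the resulting geometric sums in $(v,a)$ should reproduce exactly $\max\{(2-2\lambda)/(1+\lambda),\,2/(3(1+\lambda))\}$. The condition $\lambda\ge 3/5$ is expected to enter here to ensure that the boundary terms of the dyadic sums are dominated by one of the two regimes.
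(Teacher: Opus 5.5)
\paragraph{The parametrisation is wrong.} Both halves of your argument inherit this error. The lattice $\ZZ(v^3,uv^2,u^2v,u^3)+\ZZ(0,v^2,2uv,3u^2)$ does \emph{not} have bounded index in $L_{P_{u,v}}\cap\ZZ^4$. With $\vq_1,\vq_2$ as in \eqref{def_p12}, which form a basis of that lattice, one has $(0,v^2,2uv,3u^2)=v\vq_2-r_0\vq_1$, so the index is $v$. Concretely, $(4,4,3,2)\in L_{P_{1,2}}$ has $p_0=4$, which is not a multiple of $v^3=8$. A general special point is $k_1\vq_1+k_2\vq_2$ with $p_0=v^2D$, where $D=k_1v+k_2r_0$ is an arbitrary positive integer, and $p_1/p_0=u/v+k_2/(vD)$. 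Your family is exactly the subfamily with $v\mid k_2$.

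This loss is fatal, not cosmetic. For $b\ne0$, your own two conditions give $1/(av)\le|t|\le|s|+|s-t|\ll(av^3)^{-(1+\lambda)/2}$. This forces $a\gg v^{(1+3\lambda)/(1-\lambda)}$ and hence $|\xi-u/v|\ll v^{-2(1+\lambda)/(1-\lambda)}$. By Jarn\'ik's theorem, the entire $b\ne0$ part of your family has dimension at most $(1-\lambda)/(1+\lambda)$. This is below $2/(3(1+\lambda))$ for every $\lambda>1/3$; at $\lambda=3/5$ it is $1/4$, against the required $1/2$. So construction (ii) cannot yield $(2-2\lambda)/(1+\lambda)$ for any choice of $\alpha,\beta$, and your lower bound fails on $3/5\le\lambda<2/3$. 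Symmetrically, your upper-bound analysis never sees the points with $v\nmid k_2$, and these are precisely the ones producing the $(2-2\lambda)/(1+\lambda)$ term.

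\paragraph{With the correct lattice, the ``main obstacle'' disappears.} Take $t=k_2/(vD)$ with $k_2\ne0$. The window condition gives $1/(vD)\ll(v^2D)^{-(1+\lambda)/2}$, i.e.\ $p_0=v^2D\gg v^{2/(1-\lambda)}$. Hence $|\xi-u/v|\ll p_0^{-(1+\lambda)/2}\ll v^{-(1+\lambda)/(1-\lambda)}$. Jarn\'ik's theorem then gives the bound $2(1-\lambda)/(1+\lambda)$ at once: you cover by the whole window around $u/v$ and discard the fine structure, with no dyadic optimisation needed. This is the paper's argument, phrased there through the Hessian coefficient $|p_0p_2-p_1^2|=k_2^2v^2\ll p_0^{1-\lambda}$ from Lemma~\ref{lem:approx-window}.

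For the lower bound, the paper uses $k_2=1$, i.e.\ $\vp(k)=k\vq_1+\vq_2$ with $k\asymp V^{\gamma}$ and $\gamma\ge\alpha=(3\lambda-1)/(1-\lambda)$. These points lie outside your family. A Cantor construction over the resulting clusters gives $\min\{2/(2+\gamma),(\gamma+2)/((\gamma+3)(1+\lambda))\}$. At $\gamma=\alpha$ this equals $(2-2\lambda)/(1+\lambda)$ exactly when $\lambda\ge 3/5$. So the hypothesis $\lambda\ge3/5$ enters in the lower bound, not in the upper bound as you anticipated.
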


Notice that Theorem~\ref{th3} together with Corollary~\ref{cor:generic-upper} immediately implies Theorem~\ref{th1} for $\lambda\ge \frac35$. Indeed, for this range the special tangent configurations already account for the full dimension appearing in
Theorem~\ref{th1}, while Theorem~\ref{th2} shows that generic points contribute no larger dimension. Smaller values of $\lambda$ are already covered in previous works~\cite{badziahin_2025, badziahin_2026} hence we do not pursue an analogue of Theorem~\ref{th3} below $3/5$.

% Thus,  Combined with the previously known formula
% \eqref{dim_ver} for $1/3\le\lambda\le3/5$, this proves the main theorem.
% We do not pursue an analogue of Theorem~\ref{th3} below $3/5$, since the
% dimension of the full set is already known there from the work of
% Badziahin~\cite{badziahin_2026}.

We finally mention the equivalent interpretation of $S_n(\lambda)$ in terms of Diophantine
exponents. Bugeaud and Laurent~\cite{bug_lau_2005} introduced the exponent
$\lambda_n(\xi)$ as the supremum of the exponents $\lambda$ for which
\eqref{main_ineq} has infinitely many solutions; see also the survey of
Bugeaud~\cite{bugeaud_2016} and the recent work of
Badziahin~\cite{badziahin_2025,badziahin_2026}. With this notation,
$$\{\xi:\lambda_n(\xi)>\lambda\}\subseteq S_n(\lambda)\subseteq \{\xi:\lambda_n(\xi)\ge\lambda\},$$ so Theorem~\ref{th1}
determines the complete upper-level dimension function for $\lambda_3$.
Bugeaud and Laurent~\cite{bug_lau_2007} posed the problem of whether the spectrum of
$\lambda_n$ is the whole interval $[1/n,\infty]$. Theorem~\ref{th1} answers this positively for $n=3$. We will not use the exponent formulation elsewhere in the paper.

The paper is organised as follows. Section~\ref{sec:binary-cubics}
introduces the algebraic and geometric framework for the proof; in particular,
it identifies the special tangent subspaces with binary cubic forms of zero discriminant. In
Section~\ref{sec:window} we translate proximity to $\VVV_3$ into quantitative
bounds for the Hessian coefficients and prove the counting estimate of
Theorem~\ref{th2}. Section~\ref{sec:upper} applies this counting theorem to obtain Corollary~\ref{cor:generic-upper}.
Section~\ref{sec:special-lower} is devoted to the special points and proves the lower bound in
Theorem~\ref{th3} by constructing suitable special approximants and applying
a Cantor-type construction together with the mass distribution principle.
Section~\ref{sec:special-upper} proves the matching upper bound for the special set using the
explicit parametrisation of the lattices $L_{P_{u,v}}$. These ingredients
complete the proof of Theorem~\ref{th1}.

\section{Binary cubics, Hessians, and changes of variables}
\label{sec:binary-cubics}

We work with the lattice of binomially normalised binary cubics
$$
F(X,Y)=aX^3+3bX^2Y+3cXY^2+dY^3,
\qquad (a,b,c,d)\in\Z^4.
$$
For a point $\mathbf{p}=(p_0,p_1,p_2,p_3) \in \Z^4$, we associate the cubic $F=F_{\mathbf p}$ with the coefficients $(a,b,c,d)=(p_0,p_1,p_2,p_3)$. For such a binomially normalised cubic $F$ we define $\Ht(F)$ to be the maximum of
$|a|,|b|,|c|,|d|$. On the other hand, for quadratic or linear forms $F$, $\Ht(F)$ denotes the
maximum of the absolute values of its coefficients in the standard monomial
basis.

\subsection{The Hessian covariant}
We write $\Disc(F)$ for the discriminant of a binary form. In the proof we will be using it for both quadratic and cubic binary forms.

Define the normalised Hessian of $F$ by
\begin{equation}\label{eq:hessian-normalized}
\Hess(F)=\frac1{36}\left(F_{XX}F_{YY}-F_{XY}^2\right).
\end{equation}
Since
$$
F_{XX}=6(aX+bY),\quad
F_{XY}=6(bX+cY),\quad
F_{YY}=6(cX+dY),
$$
we obtain
\begin{equation}\label{eq:hessian-explicit}
\Hess(F)=\Delta_0X^2+\Delta_1XY+\Delta_2Y^2,
\end{equation}
where
\begin{equation}\label{eq:deltas-general}
\Delta_0=ac-b^2,
\qquad
\Delta_1=ad-bc,
\qquad
\Delta_2=bd-c^2.
\end{equation}
For $F=F_{\mathbf p}$, this becomes
\begin{equation}\label{eq:minors-def}
\Delta_0=p_0p_2-p_1^2,\qquad
\Delta_1=p_0p_3-p_1p_2,\qquad
\Delta_2=p_1p_3-p_2^2.
\end{equation}
These are precisely the three Hessian coefficients attached to the rational approximation vector $\mathbf p$.

The discriminant of the binary quadratic form $\Hess(F)$ is
\begin{equation}\label{eq:hess-disc}
\Disc\bigl(\Hess(F)\bigr)=\Delta_1^2-4\Delta_0\Delta_2.
\end{equation}
One can check that the usual discriminant of the binary cubic satisfies
\begin{equation}\label{eq:cubic-hess-disc}
\Disc(F)=-27\,\Disc\bigl(\Hess(F)\bigr).
\end{equation}
% Indeed, substituting \eqref{eq:deltas-general} into \eqref{eq:hess-disc} gives
% $$
% \Disc\bigl(\Hess(F)\bigr)=a^2d^2-6abcd+4ac^3+4b^3d-3b^2c^2,
% $$
% whereas the discriminant of $aX^3+3bX^2Y+3cXY^2+dY^3$ is
% $$
% -27\left(a^2d^2-6abcd+4ac^3+4b^3d-3b^2c^2\right).
% $$

%\subsection{Change of variables and Hessian covariance}%DO WE NEED THIS SUBSECTION? WE CAN JUST SAY THAT HESSIAN IS A COVARIANT IS A COMMON KNOWLEDGE AND PERHAPS REFER TO SOME TEXTBOOK

For
$$
g=\begin{pmatrix}\alpha&\beta\\ \gamma&\delta\end{pmatrix}
\in\GL_2(\mathbb R),
$$
define the change of variables
\begin{equation}\label{eq:gl2-action}
(g\cdot F)(X,Y)
=
F(\alpha X+\beta Y,\gamma X+\delta Y).
\end{equation}
If $g\in\GL_2(\mathbb Z)$ then this substitution sends the lattice of integral
binomially normalised cubics to itself. Since $g^{-1}\in\GL_2(\mathbb Z)$ as well, it acts as an automorphism of
this lattice.

% Write
% $$
% X'=\alpha X+\beta Y,\qquad Y'=\gamma X+\delta Y.
% $$
% Since this change of variables is linear, the multivariable chain rule gives
% $$
% \begin{pmatrix}
% (g\cdot F)_{XX}(X,Y) & (g\cdot F)_{XY}(X,Y)\\
% (g\cdot F)_{XY}(X,Y) & (g\cdot F)_{YY}(X,Y)
% \end{pmatrix}
% =
% g^{\mathsf T}
% \begin{pmatrix}
% F_{XX}(X',Y') & F_{XY}(X',Y')\\
% F_{XY}(X',Y') & F_{YY}(X',Y')
% \end{pmatrix}
% g.
% $$
% Taking determinants gives
% $$
% \det
% \begin{pmatrix}
% (g\cdot F)_{XX} & (g\cdot F)_{XY}\\
% (g\cdot F)_{XY} & (g\cdot F)_{YY}
% \end{pmatrix}
% =
% (\det g)^2
% \det
% \begin{pmatrix}
% F_{XX}(X',Y') & F_{XY}(X',Y')\\
% F_{XY}(X',Y') & F_{YY}(X',Y')
% \end{pmatrix}.
% $$
Using the normalisation in \eqref{eq:hessian-normalized}, one can calculate
\begin{equation}\label{eq:hessian-covariance}
\Hess(g\cdot F)=(\det g)^2\,g\cdot\Hess(F),
\end{equation}
where the action on the right is the same change of variables applied to
the quadratic form $\Hess(F)$. Explicitly,
$$
(g\cdot\Hess(F))(X,Y)
=
\Hess(F)(\alpha X+\beta Y,\gamma X+\delta Y).
$$

%(maybe some general info on Hessians, as they are often used in context of cubics)

\subsection{Connecting geometric and algebraic views of special $\vp$}
We say that a nonzero vector $\vp\in\RR^4$ lies on the Veronese curve $\VVV_3$ if its
projective class $[\vp]\in\PP^3(\RR)$ lies on the projective closure of
$\{[1:\xi:\xi^2:\xi^3]:\xi\in\RR\}$.

%Let
%$$
%F_\vp(X,Y)=p_0X^3+3p_1X^2Y+3p_2XY^2+p_3Y^3
%$$
%be the binary cubic associated with
%$\vp=(p_0,p_1,p_2,p_3)\in\ZZ^4\setminus\{\mathbf 0\}$.

% Consider the geometric meaning of pure cubes and repeated linear
% factors. If $L=vX+uY$, then

% $$
% L^3=v^3X^3+3v^2uX^2Y+3vu^2XY^2+u^3Y^3.
% $$
% Thus the pure cube $L^3$ corresponds to the point
% $(v^3,v^2u,vu^2,u^3)$ on the Veronese curve.

% Now choose a rational linear form $M$ which is not proportional to $L$.
% For the family $F(t)=(L+tM)^3$, one has
% $F(0)=L^3$ and $F'(0)=3L^2M$. Hence the tangent direction at the
% Veronese point corresponding to $L^3$ is $L^2M$, and the associated
% two-dimensional tangent subspace is

% $$
% L_{P_{u,v}} = \operatorname{span}_{\RR}\{L^3,L^2M\}=L^2\RR[X,Y]_1.
% $$
% Consequently, a binary cubic lies in this tangent subspace if and only if
% it is divisible by $L^2$. It represents the point of tangency itself if
% and only if it is a scalar multiple of $L^3$.

% We can now state the precise correspondence between the geometry of the
% special subspaces and the algebraic invariants of $F_\vp$.

First, let us state a trivial, yet useful fact about the discriminant.

\begin{lemma}\label{lem:special-discriminant}
Let $F\in\QQ[X,Y]$ be a nonzero binary cubic form. Then
$$
\Disc(F)=0 \quad\Longleftrightarrow\quad F=L^2N
$$
for some linear forms $L,N\in\QQ[X,Y]$.
\end{lemma}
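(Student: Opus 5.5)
The plan is to use the standard characterisation of the discriminant of a binary form as a product of squared differences of roots, and then handle rationality separately. Over $\overline{\QQ}$ a nonzero binary cubic $F$ factors as a product of three linear forms $L_1L_2L_3$ with $L_i=\alpha_iX-\beta_iY$. Up to a nonzero constant, $\Disc(F)$ equals $\prod_{i<j}(\alpha_i\beta_j-\alpha_j\beta_i)^2$. This also covers the degenerate case $a=0$, where one factor is $Y$ (a root at infinity). Hence $\Disc(F)=0$ if and only if two of the $L_i$ are proportional, i.e.\ $F=L^2N$ over $\overline{\QQ}$.

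The direction ($\Leftarrow$) is then immediate. If $F=L^2N$ with $L,N\in\QQ[X,Y]$, then $F$ has a repeated linear factor, so the product above vanishes. Alternatively, one could verify it directly from the explicit formula $\Disc(F)=-27(\Delta_1^2-4\Delta_0\Delta_2)$ of \eqref{eq:cubic-hess-disc}, using covariance \eqref{eq:hessian-covariance}. We move $L$ to $X$ by some $g\in\GL_2(\QQ)$; then $F=X^2(\lambda X+\mu Y)$, and one checks that $b^2$-type terms kill $\Delta_1^2-4\Delta_0\Delta_2$.

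For ($\Rightarrow$), the only real point is rationality. Suppose $\Disc(F)=0$, so $F=cL^2N$ over $\overline{\QQ}$ for some constant $c$.
\begin{itemize}
\item If $L$ and $N$ are proportional, then $F=cL^3$. Any Galois conjugate $\sigma$ fixes $F$, so $\sigma(L)$ must be proportional to $L$ by unique factorisation. Hence the ratio of the coefficients of $L$ is rational, and $L$ can be taken in $\QQ[X,Y]$ after rescaling.
\item Otherwise $L$ is the unique repeated factor up to scaling. Every $\sigma\in\mathrm{Gal}(\overline{\QQ}/\QQ)$ fixes $F$ and so permutes its factors preserving multiplicity. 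Thus $\sigma(L)\sim L$ again, and $L$ may be taken rational.
\end{itemize}
In both cases we then get $N=F/L^2$, which is a quotient of rational forms by polynomial division. So $N$ is also rational, and we absorb constants into $N$.

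An alternative, more elementary route for ($\Rightarrow$) avoids Galois theory. A repeated root of $F(x,1)$ is a root of $\gcd(F(x,1),F'(x,1))$, which has rational coefficients. If this gcd has degree $1$, its root is rational; if it has degree $2$, then $F$ is a cube and the root is $-(\text{coefficient})/3a$. The case $a=0$ is handled by swapping $X$ and $Y$.

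The main obstacle is merely the bookkeeping of degenerate cases ($a=0$, or $F$ a perfect cube). There is nothing deep here, which matches the lemma being called trivial.
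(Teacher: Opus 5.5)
Your proposal is correct and follows the paper's own argument. Factor $F$ over $\overline{\QQ}$, note that $\Disc(F)=0$ exactly when there is a repeated linear factor, and observe that Galois conjugation must send the (unique) repeated factor to a multiple of itself, so $L$ can be taken rational and then $N=F/L^2$ is rational. One small point concerns only your optional elementary route: swapping $X$ and $Y$ does not help when $a=d=0$, but that case, $F=3XY(bX+cY)$, is trivial to handle directly.
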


\begin{proof}
Over $\overline{\QQ}$, the discriminant of a binary cubic
vanishes if and only if the cubic has a repeated projective root.
Equivalently, $F=L^2N$ for some linear forms $L,N$ over $\overline{\QQ}$.

It remains only to check that the repeated factor is defined over
$\QQ$. A nonzero cubic has at most one repeated projective
root. Since $F$ has rational coefficients, every Galois conjugate of
a repeated root is again a repeated root. An irrational repeated root is impossible for a cubic, since two distinct repeated roots would have total multiplicity at least $4$. Hence, the repeated root is rational, and then $N=F/L^2$ also belongs to $\QQ[X,Y]$.

The converse is immediate, since a cubic divisible by $L^2$ has a
repeated root.
\end{proof}

Now we prove the connection between the generic--special points definition and algebraic objects.

\begin{proposition}
\label{prop:special-hessian-correspondence}
Let $\vp=(p_0,p_1,p_2,p_3)\in\ZZ^4\setminus\{\mathbf 0\}$. For coprime
integers $u,v$, put

$$
\mathbf q(u,v)=(v^3,v^2u,vu^2,u^3).
$$

Then the following statements hold.

\begin{enumerate}[label=\rm(\roman*)]

\item The point $\vp$ is generic if and only if $\Disc(\Hess(F_\vp))\neq0$ or, equivalently, $ \Disc(F_\vp)\ne0$.

\item The point $\vp$ is special and lies in a tangent subspace away from
its point of tangency, that is, for some coprime integers $u,v$,

$$
\vp\in L_{P_{u,v}}\setminus\QQ\mathbf q(u,v),
$$

if and only if $\Disc(F_\vp)=0$ and $\Hess(F_\vp)\not\equiv0$. In this case, the pair $(u,v)$ is uniquely defined by $\vp$ up to replacing $(u,v)$ by $(-u,-v)$.

\item The point $\vp$ lies on the Veronese curve, that is, for some coprime
integers $u,v$,

$$
\vp\in \QQ\mathbf q(u,v),
$$

if and only if $\Hess(F_\vp) \equiv 0$.
Equivalently, this algebraic condition may be written as

$$
\Disc(F_\vp)=0
\qquad\text{and}\qquad
\Hess(F_\vp)\equiv0.
$$

\end{enumerate}

In particular, if $\vp$ is primitive, $p_0>0$, and case~\textup{(iii)} holds,
then $\vp=(v^3,v^2u,vu^2,u^3)$ for coprime integers $u,v$ with $v>0$.
\end{proposition}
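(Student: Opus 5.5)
The plan is to translate membership in $L_{P_{u,v}}$ into divisibility of $F_\vp$ by the square of a linear form, and then read the three cases off Lemma~\ref{lem:special-discriminant} and the Hessian. Write $\ell=vX+uY$.

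\emph{First step.} I would check that $\vp\in L_{P_{u,v}}$ if and only if $\ell^2\mid F_\vp$. By \eqref{def_la} with $P_{u,v}(x)=u^2-2uvx+v^2x^2$, the defining equations are
$$u^2p_j-2uv\,p_{j+1}+v^2p_{j+2}=0,\qquad j=0,1.$$
This is a rank-two system, so $L_{P_{u,v}}$ is two-dimensional.

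On the other side, the forms $\ell^2X$ and $\ell^2Y$ are independent cubics divisible by $\ell^2$. Expanding in the binomial basis gives
$$\ell^2X=v^2X^3+3\cdot\tfrac{2uv}{3}X^2Y+3\cdot\tfrac{u^2}{3}XY^2,$$
so its coefficient vector is $(v^2,\tfrac{2uv}{3},\tfrac{u^2}{3},0)$. One verifies directly that this vector satisfies both equations, for instance
$$u^2v^2-2uv\cdot\tfrac{2uv}{3}+v^2\cdot\tfrac{u^2}{3}=0.$$
The check for $\ell^2Y$ is analogous. Both spaces are two-dimensional and one contains the other, so they coincide.

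\emph{Part (i).} By Lemma~\ref{lem:special-discriminant}, $\Disc(F_\vp)=0$ exactly when $F_\vp=L^2N$ with $L,N$ rational. After scaling, $L=vX+uY$ with $u,v$ coprime integers, so $\vp$ is special. The converse follows from the first step. The equivalence with $\Disc(\Hess F_\vp)\ne0$ is \eqref{eq:cubic-hess-disc}.

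\emph{Part (iii).} One direction is easy: $\vp\in\QQ\mathbf q(u,v)$ means $F_\vp=c\,\ell^3$. For $\ell^3$ we have $(a,b,c,d)=(v^3,v^2u,vu^2,u^3)$, and each $\Delta_i$ in \eqref{eq:deltas-general} vanishes, so the Hessian is zero.

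For the other direction, assume $\Delta_0=\Delta_1=\Delta_2=0$. Then all $2\times2$ minors of the matrix
$$\begin{pmatrix}a&b&c\\ b&c&d\end{pmatrix}$$
vanish. The minors $ac-b^2$, $ad-bc$, $bd-c^2$ are exactly these, and the relations force the remaining ones as well. Hence the matrix has rank at most one, and $(a,b,c,d)$ is a geometric progression. Handling the zero entries as separate cases, I would conclude that $\vp$ is a rational multiple of $(v^3,v^2u,vu^2,u^3)$.

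This is the step I expect to need the most care: degenerate cases such as $a=0$, and the rationality of the ratio. For rationality, note that $u/v=b/a$ is already rational when $a\ne0$; when $a=0$ the vanishing minors force $b=c=0$, giving $\vp\propto(0,0,0,1)$.

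The "equivalently" clause follows because a zero Hessian gives zero discriminant by \eqref{eq:cubic-hess-disc}. For the final remark about primitive $\vp$, write $\vp=r\,\mathbf q(u,v)$. The vector $\mathbf q(u,v)$ is primitive because $\gcd(v^3,u^3)=1$, so primitivity of $\vp$ gives $r=\pm1$. The sign is then fixed by $p_0>0$ after possibly replacing $(u,v)$ by $(-u,-v)$.

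\emph{Part (ii).} This is the complement of (iii) inside the special set, using (i) and (iii). For uniqueness, suppose $\vp$ lies in two different planes $L_{P_{u,v}}$. Then $F_\vp$ is divisible by two non-proportional squares $\ell_1^2$ and $\ell_2^2$, so its degree would be at least $4$ unless $F_\vp=0$, which is a contradiction. Hence the linear form $\ell$ is determined up to scalar, and coprimality pins $(u,v)$ down up to sign.
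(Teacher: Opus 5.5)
Your proof is correct. Part (i), the uniqueness argument and the primitivity remark coincide with the paper's, but the first step and part (iii) go differently.

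To identify $L_{P_{u,v}}$ with the cubics divisible by $\ell^2$, the paper proves the reverse containment. It writes $F_\vp=XQ_1+YQ_2$ with $(F_\vp)_X=3Q_1$ and $(F_\vp)_Y=3Q_2$, where membership in $L_{P_{u,v}}$ says $\ell\mid Q_1,Q_2$. You instead check that $\ell^2X$ and $\ell^2Y$ satisfy the defining equations. Both arguments finish with the same dimension count.

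For (iii), the paper uses \eqref{eq:cubic-hess-disc} to see that a zero Hessian makes $\vp$ special. It then writes $F_\vp=\omega\ell^3+3\mu\ell^2M$ and computes $\Hess(F_\vp)=-\mu^2\ell^2$. You observe that $\Delta_0,\Delta_1,\Delta_2$ are exactly the $2\times2$ minors of the Hankel matrix (there are no \emph{remaining} ones). Hence a zero Hessian means $\vp$ is a geometric progression.

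Your route is more elementary, but the paper's formula gives more. For \emph{every} pair $(u,v)$ with $\vp\in L_{P_{u,v}}$, it shows that $\Hess(F_\vp)\equiv0$ if and only if $\vp\in\QQ\mathbf q(u,v)$ for that same pair. This yields both directions of (ii) at once.

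In your version, the forward direction of (ii) is not literally \emph{the complement of (iii)}. Suppose $\vp\in L_{P_{u,v}}\setminus\QQ\mathbf q(u,v)$ and $\Hess(F_\vp)\equiv0$. Then (iii) only gives $\vp\in\QQ\mathbf q(u',v')\subset L_{P_{u',v'}}$ for some possibly different pair. You need your degree-four uniqueness argument to force $(u',v')=\pm(u,v)$ and obtain the contradiction. You prove uniqueness anyway, and it holds for every nonzero special $\vp$, not only in case (ii). So this is just a matter of placing it before (ii).
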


\begin{proof}
% We have already proved that

% $$
% \vp\text{ is special}
% \quad\Longleftrightarrow\quad
% \Disc(F_\vp)=0.
% $$

By definition, $\vp\in L_{P_{u,v}}$ if and only if
$$
u^2p_0-2uvp_1+v^2p_2=u^2p_1-2uvp_2+v^2p_3=0.
$$
Let
$$
Q_1(X,Y):=p_0X^2+2p_1XY+p_2Y^2,\qquad Q_2(X,Y):=p_1X^2+2p_2XY+p_3Y^2.
$$
The above equations are equivalent to
$$
Q_1(u,-v)=Q_2(u,-v)=0,
$$
which is equivalent to the fact that the forms $Q_1$ and $Q_2$ are
multiples of $L=vX+uY$. Next, one computes
$$
F_\vp=XQ_1+YQ_2,\qquad (F_{\vp})_X=3Q_1,\qquad (F_{\vp})_Y=3Q_2.
$$
Therefore $L\mid F_\vp,(F_{\vp})_X,(F_{\vp})_Y$, and hence $L^2\mid F_\vp$.
By Lemma~\ref{lem:special-discriminant}, the last condition implies $\Disc(F_\vp)=0$. To complete the statement~(i), we notice that the family of binary forms $\{F_{\vp}: \vp\in L_{P_{u,v}}\}$ is a two-dimensional linear subspace of the space of binary cubic forms. The same is true for the subspace $\FFF_{u,v}$ of multiples of $L^2$. Since one subspace is contained in the other and their dimensions coincide, the subspaces coincide too. Thus, varying $(u,v)$ and applying Lemma~\ref{lem:special-discriminant}, we obtain
$$
\vp\text{ is special}\quad\Longleftrightarrow\quad \Disc(F_\vp)=0.
$$
Taking complements proves~\rm(i).

%By Lemma~\ref{lem:special-discriminant}, we have that $\vp$ is special if and only if $\Disc(F_{\vp})=0$. Indeed, if $\vp$ is special, then $\vp\in L_{P_{u,v}}$ for some coprime integers $u,v$, so $F_{\vp}$ is divisible by $(vX+uY)^2$ and therefore has zero discriminant. Conversely, if $\Disc(F_{\vp})=0$, then $F_{\vp}=L^2N$ for some rational linear forms $L,N$. Write $L=vX+uY$ for some coprime integers $u,v$. Then by description of $L_{P_{u,v}}$ from the beginning of the subsection, we get $\vp\in L_{P_{u,v}}$, so $\vp$ is special.

%Taking complements proves \rm(i). The equivalence with
%$\Disc\Hess(F_\vp)\ne0$ follows from \eqref{eq:cubic-hess-disc}.

Now, suppose that $\vp$ is special, which implies that
the cubic $F_\vp$ is divisible by $L^2$. Hence $F_\vp=L^2N$ for some rational linear form $N$. Choose a rational linear form $M=rX+sY$ with $vs-ur=1$, which is linearly independent of $L$.
Writing $N=\omega L+3\mu M$, we obtain

$$
F_\vp=\omega L^3+3\mu L^2M.
$$

A direct calculation gives

$$
\Hess(F_\vp)=-\mu^2 (vs-ur)^2L^2 = -\mu^2 L^2.
$$

Therefore

\begin{equation}\label{eq:equivalence1}
\Hess(F_\vp)\equiv 0\quad\Longleftrightarrow\quad\mu=0\quad\Longleftrightarrow\quad F_\vp=\omega L^3.
\end{equation}

The binomially normalised coefficient vector of $L^3=(vX+uY)^3$ is
$\mathbf q(u,v)$. Consequently,

\begin{equation}\label{eq:equivalence2}
F_\vp=\omega L^3
\quad\Longleftrightarrow\quad
\vp=\omega\mathbf q(u,v)
\quad\Longleftrightarrow\quad
\vp\in\QQ\mathbf q(u,v).
\end{equation}

%This proves \rm(iii).

% If $\Hess(F_\vp)\not\equiv0$, then $\mu\ne0$. Thus $F_\vp$ is divisible by
% $L^2$ but is not a scalar multiple of $L^3$. In terms of coefficient
% vectors, this is precisely

% $$
% \vp\in L_{P_{u,v}}\setminus\QQ\mathbf q(u,v).
% $$

% Since $\vp$ is special, one also has $\Disc(F_\vp)=0$. This proves both
% directions of \rm(ii).

If $\Hess(F_\vp)\equiv0$, then $\Disc\bigl(\Hess(F_\vp)\bigr)=0$, so
\eqref{eq:cubic-hess-disc} gives $\Disc(F_\vp)=0$. Thus $\vp$ is
special, and the equivalences \eqref{eq:equivalence1} and \eqref{eq:equivalence2} prove \rm(iii).

For a special point, the same equivalences show that
$$
\Hess(F_\vp)\not\equiv0
\quad\Longleftrightarrow\quad
\vp\in L_{P_{u,v}}\setminus\QQ\mathbf q(u,v),
$$
which proves \rm(ii).

In case \rm(ii), the pair $(u,v)$ is unique up to simultaneous change of
sign. Indeed, $L$ is the repeated linear factor of $F_\vp$. If a nonzero
cubic had two non-proportional repeated linear factors $L_1$ and $L_2$,
then $L_1^2L_2^2$ would divide it, which is impossible because this product
has degree $4$.

Finally, suppose that $\vp$ is primitive, $p_0>0$, and
$\vp=c\mathbf q(u,v)$ with $u,v$ coprime. The vector $\mathbf q(u,v)$ is
primitive, so the integrality and primitivity of $\vp$ imply $c=\pm1$.
Replacing $(u,v)$ by $(-u,-v)$ absorbs the sign, and $p_0=v^3>0$ implies
$v>0$.
\end{proof}

\section{Counting generic points with bounded Hessian coefficients}\label{sec:window}

Throughout this section $I\Subset\R$ is fixed.  Suppose that
$p_1/p_0\in I$, $Q\le p_0<2Q$, and
$$
\max_i|\Delta_i(\mathbf p)|\le M.
$$
If $M\ll Q^2$, then
$$
\Ht(F_{\mathbf p})\asymp_I Q.
$$
Indeed, since $I$ is compact, we have $|p_1|\ll_I Q$.  Moreover,
$$
|p_2|
=
\left|\frac{p_1^2+\Delta_0}{p_0}\right|
\ll_I Q
$$
and
$$
|p_3|
=
\left|\frac{p_1p_2+\Delta_1}{p_0}\right|
\ll_I Q.
$$
Since also $Q\le p_0<2Q$, it follows that
$$
\Ht(F_{\mathbf p})\asymp_I Q.
$$
Thus the dyadic condition $p_0\asymp Q$ is equivalent, up to constants
depending on $I$, to a height bound for the associated binary cubic.

%\subsection{Good approximation implies bounded Hessian coefficients}

We establish an elementary connection between Hessians and rational points near the Veronese curve.
% Simultaneous approximation with error $Q^{-\lambda}$ forces all three Hankel
% minors, equivalently all three Hessian coefficients, to be bounded in absolute
% value by a constant multiple of $Q^{1-\lambda}$.

\begin{lemma}\label{lem:approx-window}
Let $I\Subset\mathbb R$ be compact.  Suppose that $Q\le p_0<2Q$, $x\in I$, and
$|p_0x^i-p_i|\le E$ for $1\le i\le3$.  Then
$$
        |\Delta_i(\mathbf p)|\le C_I QE+C_IE^2\qquad(0\le i\le2).
$$
In particular, if $Q\ge1,\,E=Q^{-\lambda}$ and $\lambda\ge0$, then
$|\Delta_i(\mathbf p)|\ll_I Q^{1-\lambda}$.
\end{lemma}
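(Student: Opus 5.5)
The plan is to write each coordinate of $\vp$ as its exact value on the curve plus a small error. Then observe that every Hessian coefficient vanishes identically at points of the Veronese curve, so only terms linear and quadratic in the errors survive.

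Put $p_i=p_0x^i-\varepsilon_i$ for $1\le i\le 3$, with $|\varepsilon_i|\le E$, and set $\varepsilon_0=0$. The key identity is that for the exact vector $(p_0,p_0x,p_0x^2,p_0x^3)$ all three minors vanish. For instance,
$$p_0\cdot p_0x^2-(p_0x)^2=0,\qquad p_0\cdot p_0x^3-p_0x\cdot p_0x^2=0,\qquad p_0x\cdot p_0x^3-(p_0x^2)^2=0.$$
This is immediate, and it is the algebraic reflection of Proposition~\ref{prop:special-hessian-correspondence}(iii).

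Now expand each $\Delta_i$, which is a bilinear expression of the form $p_jp_k-p_lp_m$, after substituting $p_i=p_0x^i-\varepsilon_i$. The zeroth-order term vanishes by the identity above. The first-order terms are sums of products of one entry $p_0x^j$ and one $\varepsilon_k$. For example,
$$\Delta_0=-p_0\varepsilon_2+2p_0x\varepsilon_1-\varepsilon_1^2 .$$
Since $x\in I$ is bounded and $p_0<2Q$, each such term is at most $C_IQE$ in absolute value. The second-order terms are products $\varepsilon_j\varepsilon_k$ with $j,k\ge1$, each bounded by $E^2$. Carrying this out for $\Delta_1$ and $\Delta_2$ in the same way gives
$$|\Delta_i|\le C_IQE+C_IE^2,$$
with $C_I$ depending only on $\max_{x\in I}|x|^3$ and on absolute numerical constants.

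For the final assertion, take $E=Q^{-\lambda}$ with $\lambda\ge0$ and $Q\ge1$. Then $E\le 1\le Q$, so $E^2\le QE=Q^{1-\lambda}$. Combining this with the main bound gives $|\Delta_i|\ll_I Q^{1-\lambda}$.

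There is no real obstacle here. The only care needed is to bookkeep the expansion of $\Delta_1$ and $\Delta_2$ correctly so that the cancellation of the main term is visible; everything else is the triangle inequality.
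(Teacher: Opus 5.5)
Your proposal is correct and follows essentially the same route as the paper. The paper also substitutes $p_i=p_0x^i-e_i$, writes out all three $\Delta_i$ explicitly (your $\Delta_0$ agrees with its formula), and bounds the linear terms by $O_I(QE)$ and the quadratic terms by $O(E^2)$, using $E^2\le QE$ for the final assertion.
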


\begin{proof}
Write $e_i=p_0x^i-p_i$, so that $p_i=p_0x^i-e_i$ and $|e_i|\le E$.  A direct
substitution gives
$$
\begin{aligned}
\Delta_0&=-p_0e_2+2p_0xe_1-e_1^2,\qquad
\Delta_1=-p_0e_3+p_0xe_2+p_0x^2e_1-e_1e_2,\\
\Delta_2&=-p_0xe_3-p_0x^3e_1+2p_0x^2e_2+e_1e_3-e_2^2.
\end{aligned}
$$
Since $x$ stays in the fixed compact interval $I$ and $p_0<2Q$, all three expressions
are $O_I(QE+E^2)$.  If $E=Q^{-\lambda}$, then $E^2\le QE$ for $Q\ge1$ and
$\lambda\ge0$.
\end{proof}

\subsection{Counting generic points}\label{sec:counting}

We use Davenport's theorem \cite{davenport_1951} on equivalence classes of irreducible binary cubic forms together with Hooley's additional calculation \cite[p.~153]{hooley_1968} for the number of equivalence classes of reducible binary cubic forms.  The binomially normalised lattice is a $\GL_2(\mathbb Z)$-invariant finite-index sublattice of the usual coefficient lattice, so restricting the counting result to this lattice can only decrease the number of classes. We combine Davenport's and Hooley's results in the following statement.

\begin{theorem}\label{thm:davenport}
For $D\ge 1$, the number of $\GL_2(\Z)$-equivalence classes of integral binary cubic forms $F$ with
$$
        0<|\Disc(F)|\le D
$$
is $O(D)$.
\end{theorem}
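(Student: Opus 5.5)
The statement combines two classical results, so I would cite them and split the count according to whether $F$ is irreducible or reducible over $\QQ$. The binomial normalisation costs nothing. Every binomially normalised integral cubic $aX^3+3bX^2Y+3cXY^2+dY^3$ is in particular an integral cubic in the usual monomial basis. By \eqref{eq:gl2-action} the $\GL_2(\ZZ)$-action preserves both lattices. Two binomially normalised forms that are equivalent as binomially normalised forms are equivalent as ordinary forms. So the map from classes in the sublattice to classes in the full lattice has fibres of bounded size: the index of the sublattice is $9$, and any extra splitting of classes is controlled by the finite stabilisers. Hence the class count in the sublattice is at most a constant times the class count in the full lattice. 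Moreover, $\Disc$ computed in either normalisation differs only by a fixed constant factor, which changes $D$ by a bounded multiple. It therefore suffices to prove the bound for ordinary integral cubics.

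For \emph{irreducible} forms with $0<|\Disc(F)|\le D$, I would invoke Davenport's theorem \cite{davenport_1951}. It gives the asymptotic count of $\GL_2(\ZZ)$-classes of irreducible integral binary cubics with positive discriminant at most $D$, and separately with negative discriminant at most $D$ in absolute value. Both counts are $\asymp D$ (namely $\frac{\pi^2}{72}D$ and $\frac{\pi^2}{24}D$), and in particular $O(D)$. The mechanism behind Davenport's result is reduction theory. Every class contains a form whose Hessian is a reduced binary quadratic form. Reducedness together with $|\Disc|\le D$ confines the coefficient vector to a region of volume $\asymp D$, and lattice-point counting in this region, which has cusps, gives the main term.

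For \emph{reducible} forms with nonzero discriminant, I would use Hooley's computation \cite[p.~153]{hooley_1968}. Such an $F$ has a rational linear factor, which after clearing denominators and applying $\GL_2(\ZZ)$ may be taken to be $Y$. Then $F=Y\cdot q(X,Y)$, or $F$ becomes $X\cdot(\ldots)$ after moving the factor. The remaining freedom is the subgroup of $\GL_2(\ZZ)$ stabilising that line, which is generated by the unipotent shifts $X\mapsto X+tY$ and by signs. Using the shifts, one reduces a coefficient modulo another, and the resulting parametrisation shows that the number of classes is $O(D)$. In fact Hooley obtains this with a power of $\log D$ that is at worst of lower order; the precise statement I would cite is that the reducible contribution is $O(D)$ (or $o(D)$).

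The only point I expect to need care is the comparison between the binomially normalised lattice and the full lattice: fibres of the class map must be bounded, and the discriminants must be comparable. Both follow directly from finiteness of the index and the explicit constant relating the two discriminant normalisations. Everything else is a citation, and adding the irreducible and reducible counts gives $O(D)$.
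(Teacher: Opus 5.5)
This is the paper's route: Davenport's theorem for the irreducible classes, Hooley's calculation for the reducible ones, and the observation that binomially normalised forms form a $\GL_2(\ZZ)$-invariant sublattice. Two side remarks are wrong, though neither is load-bearing.

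First, the sublattice step needs no index or stabiliser argument. An invariant sublattice is a union of entire $\GL_2(\ZZ)$-orbits, so the map from classes of binomially normalised forms to classes of ordinary forms is injective. This is exactly the paper's ``can only decrease the number of classes''.

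Second, the reducible count is not $o(D)$, and no logarithmic factor appears. The forms $Y(X^2+\beta XY+\gamma Y^2)$ alone realise $\gg D$ distinct discriminants $\beta^2-4\gamma$, hence $\gg D$ classes. The count is, however, $O(D)$, and your own parametrisation proves this directly. Take $F=Y(\alpha X^2+\beta XY+\gamma Y^2)$ with $\alpha\ne0$, so that $\Disc(F)=\alpha^2(\beta^2-4\alpha\gamma)$, and reduce $\beta$ modulo $2|\alpha|$. Then $1\le|\alpha|\le\sqrt D$, and the number of classes is $\ll\sum_{1\le|\alpha|\le\sqrt D}\bigl(|\alpha|+D\alpha^{-2}\bigr)\ll D$.
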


Our next goal is to count the number of cubic forms in each fixed class. Lemma~\ref{lem:sector-cover-count} below is a primitive lattice-point bound for sector covers. Lemma~\ref{lem:model-sector-cover} then constructs sector covers for the two model cubic-Hessian pairs. Later, we will convert the problem of counting cubic forms in each equivalence class to one of these model forms. Here by a sector we mean a set
$$
        \Sigma(R,\Theta)=\{r(\cos\theta,\sin\theta):R\le r<2R,
        \ \theta\in\Theta\},
$$
where $R>0$ and $\Theta\subset\R/2\pi\Z$ is an interval.  Its area is
$\asymp R^2|\Theta|$.

\begin{lemma}\label{lem:sector-cover-count}
Let $\Lambda\subset\R^2$ be unimodular, and let $\Lambda_{\rm prim}$ denote
the set of primitive vectors, i.e., the set of vectors $v\in \Lambda$ for which $v=kw$, with $w \in \Lambda$ and $k\in\ZZ$, implies $|k|=1$. If a set $E\subset\R^2$ is
covered by finitely many sectors $\Sigma_j$, then
$$
        \#(\Lambda_{\rm prim}\cap E)
        \ll
        \#\{j\}+\sum_j \operatorname{area}(\Sigma_j).
$$
The implied constant is absolute.
\end{lemma}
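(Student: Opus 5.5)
It suffices to prove the bound for a single sector, with constant independent of $R$, $\Theta$ and $\Lambda$:
$$
\#(\Lambda_{\rm prim}\cap\Sigma(R,\Theta))\ll 1+R^2|\Theta|.
$$
Summing over $j$ then gives the lemma, since a point lying in several sectors is only overcounted, which is harmless for an upper bound. We may also assume $|\Theta|\le\pi/2$. Otherwise we split $\Theta$ into at most four subintervals of length at most $\pi/2$. This changes the number of sectors only by a constant factor and does not increase the total area.

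The key observation is that two primitive vectors $v,w\in\Lambda$ that are not parallel span a sublattice of $\Lambda$ with nonzero index. Since $\Lambda$ is unimodular, this gives $|\det(v,w)|\ge1$. For $v,w\in\Sigma(R,\Theta)$ with directions $\theta_v,\theta_w$, we have
$$
|\det(v,w)|=|v||w||\sin(\theta_v-\theta_w)|\le 4R^2|\theta_v-\theta_w|.
$$
So any two non-parallel primitive points of the sector have angular separation at least $1/(4R^2)$. A direction contains at most two primitive vectors, namely $\pm w$. Since $|\Theta|\le\pi/2<\pi$, in fact only one of them can lie in the sector, though a factor of two would not matter anyway.

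Now order the primitive points of the sector by angle. Consecutive directions are separated by at least $1/(4R^2)$. Hence the number of distinct directions is at most $1+4R^2|\Theta|$, and the number of primitive points is at most $2(1+4R^2|\Theta|)$. This is $\ll 1+\operatorname{area}(\Sigma)$, because $\operatorname{area}(\Sigma)=\tfrac32R^2|\Theta|$.

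The one step needing care is the determinant claim. If $v,w\in\Lambda$ are linearly independent, then $\ZZ v+\ZZ w$ is a full-rank sublattice of $\Lambda$. Its covolume $|\det(v,w)|$ is a positive integer multiple of $\operatorname{covol}(\Lambda)=1$. Primitivity is used only to ensure that parallel primitive vectors coincide up to sign.

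With this, the constant is absolute (for example $16$). It also does not depend on the ratio of $R$ to the covolume, so even sectors with $R$ very small relative to the lattice scale are handled uniformly by the ``$+1$'' term.
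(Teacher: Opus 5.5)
Your proof is correct and follows essentially the same route as the paper: reduce to a single sector of angular width at most $\pi/2$, then compare $|\det(v,w)|\ge 1$ for non-parallel lattice vectors with $|\det(v,w)|\le 4R^2|\theta_v-\theta_w|$ to get angular separation $\gg R^{-2}$, and conclude because each ray carries at most one primitive vector. The only cosmetic difference is that you justify the determinant bound via the index of $\ZZ v+\ZZ w$ in $\Lambda$, whereas the paper writes $\Lambda=A\ZZ^2$ with $|\det A|=1$.
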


\begin{proof}
It is enough to prove the claim for one sector
$\Sigma=\Sigma(R,\Theta)$.  By subdividing $\Theta$ into at most four intervals,
we may assume that its angular length is at most $\pi/2$.
Then, in particular, two distinct directions occurring in $\Sigma$ cannot differ by
$\pi$. Write $\Lambda=A\Z^2$ with $|\det A|=1$. If
$z_i=Am_i\in\Lambda$, $i=1,2$, have distinct directions in $\Sigma$, then
they are non-collinear and
$$
        |\det(z_1,z_2)|=|\det A|\,|\det(m_1,m_2)|
$$
 is a positive integer, and therefore is at least one.  On the other hand, for
$z_1,z_2\in\Sigma$,
$$
        |\det(z_1,z_2)|\le 4R^2\,\angle(z_1,z_2),
$$
where the angle is the difference of their arguments, which lies in
$[0,\pi/2]$. Thus distinct directions of primitive lattice vectors in
$\Sigma$ are separated by $\gg R^{-2}$. The interval $\Theta$ therefore
contains $O(1+R^2|\Theta|)$ such directions. Each ray from the origin contains
at most one primitive vector of $\Lambda$. Hence
$$
        \#(\Lambda_{\rm prim}\cap\Sigma)\ll 1+R^2|\Theta|
        \ll 1+\operatorname{area}(\Sigma),
$$
and the lemma follows.
\end{proof}

We next verify that the specific model regions arising from binary cubics admit sector covers with the required total-area bound.

\begin{lemma}\label{lem:model-sector-cover}
Let $U\ge 1$, $T\ge 0$, and put
$$F_+(X,Y)=XY(X-Y),\qquad H_+(X,Y)=-\frac{1}{9}(X^2-XY+Y^2),$$
$$F_-(X,Y)=X^3+Y^3,\qquad H_-(X,Y)=XY.$$
Here, $H_\pm$ are the normalised Hessians of $F_\pm$. For each sign $\pm$, the region
$$\Omega_\pm(U,T)=\{(X,Y):U\le |F_\pm(X,Y)|<2U,\ |H_\pm(X,Y)|\le T\}$$
can be covered by sectors $\Sigma_j$ satisfying
\begin{equation}\label{eq:model-sector-cover}
\#\{j\}+\sum_j\operatorname{area}(\Sigma_j)\ll 1+T.
\end{equation}
The implied constant is absolute.
\end{lemma}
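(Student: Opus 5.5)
The plan is to pass to polar coordinates and exploit homogeneity. Write $(X,Y)=r(\cos\theta,\sin\theta)$ and put $f_\pm(\theta)=F_\pm(\cos\theta,\sin\theta)$ and $h_\pm(\theta)=H_\pm(\cos\theta,\sin\theta)$. Then
$$
|F_\pm|=r^3|f_\pm(\theta)|,\qquad |H_\pm|=r^2|h_\pm(\theta)|.
$$
Since $|f_\pm|\le C$ on the circle and $U\ge1$, every point of $\Omega_\pm(U,T)$ satisfies $r\ge (U/C)^{1/3}\ge c>0$ for an absolute constant $c$. The covers will consist of sectors $\Sigma(R,\Theta)$ with dyadic radii $R$. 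In each regime we bound the number of sectors and their total area separately, and check that both are $\ll 1+T$.

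\emph{Case $+$.} The form $X^2-XY+Y^2$ is positive definite, so $|h_+(\theta)|\ge c_0>0$ for all $\theta$. Hence $|H_+|\le T$ forces $r\le (T/c_0)^{1/2}$, and in particular the region is empty unless $T\gg1$. We cover $\Omega_+$ by full-angle sectors $\Sigma(R,\R/2\pi\Z)$ with $R=2^k$ running over dyadic values in $[c/2,(T/c_0)^{1/2}]$. There are $O(1+\log(1+T))$ such sectors, and their areas form a geometric series whose total is $\ll R_{\max}^2\ll T$. This gives \eqref{eq:model-sector-cover}.

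\emph{Case $-$.} Here $h_-(\theta)=\cos\theta\sin\theta$ vanishes only at the axes $\theta\in\frac{\pi}{2}\Z$, where $|f_-|=1$. Fix an absolute $\eta>0$ so small that on the union $A$ of the $\eta$-neighbourhoods of the four axis directions we have $|f_-|\ge1/2$. Off $A$ we have $|h_-|\ge c_1(\eta)>0$, even near the antidiagonal where $f_-$ vanishes. We treat the two angular regimes separately.
\begin{itemize}
\item \emph{Angles off $A$.} Exactly as in case $+$, the condition $|H_-|\le T$ gives $r\ll T^{1/2}$, while $r\ge c$ always. Covering with dyadic sectors $\Sigma(R,(\R/2\pi\Z)\setminus A)$ costs $O(1+\log(1+T))$ sectors of total area $\ll T$. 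The potentially unbounded radius coming from $U\le|F_-|$ near the antidiagonal is harmless, because the Hessian bound already caps $r$.
\item \emph{Angles in $A$.} From $U\le r^3|f_-|<2U$ with $1/2\le|f_-|\le C$ we get $r\asymp U^{1/3}$, so $O(1)$ dyadic radii suffice. On each axis neighbourhood, $|h_-(\theta)|\asymp|\theta-\theta_{\rm axis}|$. Therefore $r^2|h_-|\le T$ forces $|\theta-\theta_{\rm axis}|\ll T U^{-2/3}$. We cover this part with $O(1)$ sectors of radius $R\asymp U^{1/3}$ and angular width $\min\{2\eta,\,C'TU^{-2/3}\}$. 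Each has area $\ll U^{2/3}\min\{1,TU^{-2/3}\}\le T$.
\end{itemize}
Summing the two regimes gives \eqref{eq:model-sector-cover} for $\Omega_-$.

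The only delicate point is in case $-$: we have to separate the angular set cleanly so that in each piece one of $|f_-|$, $|h_-|$ is bounded below by an absolute constant. This uses that $f_-$ and $h_-$ have no common zero on the circle. Near the axes, the area bound then follows from the linear vanishing of $h_-$. The analogous issue does not arise for $F_+$, because $H_+$ is definite. All implied constants depend only on the fixed forms $F_\pm$, $H_\pm$ and on $\eta$, and hence are absolute.
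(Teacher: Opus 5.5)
Your proof is correct. The plus case is the same as the paper's: definiteness of $H_+$ caps $r\ll T^{1/2}$, $U\ge1$ gives $r\gg1$, and dyadic annuli have total area $\ll T$. The minus case takes a different route.

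\emph{The paper's route.} It reflects to $|X|\ge|Y|$ and decomposes dyadically in $|X|\asymp L$. It then splits according to whether $|X+Y|\ge|X|/2$.
\begin{enumerate}[label=(\alph*)]
\item In the non-cancellation part, $|F_-|\asymp|X|^3$ pins $L\asymp U^{1/3}$, and the Hessian restricts the slope to $\ll\min\{1,T/L^2\}$.
\item In the cancellation part near the antidiagonal, it uses the \emph{cubic} constraint to narrow the angle to $\ll U/L^3$. Summing the areas $U/L$ over $U^{1/3}\ll L\ll T^{1/2}$ gives $\ll U^{2/3}$, and only then does nonemptiness give $U^{2/3}\ll T$.
\end{enumerate}

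\emph{Your route.} You split the circle using only the fact that $f_-$ and $h_-$ have no common zero there.
\begin{enumerate}[label=(\alph*)]
\item Near the axes, $|f_-|\ge1/2$ pins $r\asymp U^{1/3}$. The simple zero of $h_-$ then gives angular width $\ll TU^{-2/3}$ and area $\ll T$.
\item Away from the axes, $|h_-|\ge c_1$ caps $r\ll T^{1/2}$. You discard the cubic constraint entirely there, including near the antidiagonal, and use wide annular sectors of total area $\ll T$.
\end{enumerate}

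\emph{What each buys.} Your argument is shorter, needs no reflection and no cancellation analysis, and makes the structural reason explicit: the cubic and its Hessian have no common root, and the Hessian's real roots are simple. It would apply to any fixed real cubic of nonzero discriminant, with constants depending on that cubic. The paper's argument is more explicit and gives the sharper area bound $U^{2/3}$ in the cancellation zone, but the stated $1+T$ does not need it.

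\emph{Cosmetic point.} The complement of $A$ in the circle is a union of four arcs, not an interval. So each dyadic radius there needs four sectors, and the full-angle sectors in the plus case can likewise be split into quarter-arcs. This only affects constants.
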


\begin{proof}
Write $\|(X,Y)\|=(X^2+Y^2)^{1/2}$.

\medskip
\noindent
\emph{The plus model.}
The inequalities
$$\frac{1}{2}(X^2+Y^2)\le X^2-XY+Y^2\le \frac{3}{2}(X^2+Y^2)$$
follow from
$$X^2-XY+Y^2=\frac{1}{2}(X^2+Y^2)+\frac{1}{2}(X-Y)^2$$
and $|XY|\le (X^2+Y^2)/2$. Hence $|H_+(X,Y)|\le T$ implies $\|(X,Y)\|\ll T^{1/2}$.

Also, $|X|,|Y|\le \|(X,Y)\|$ and $|X-Y|\le 2\|(X,Y)\|$, so
$$|F_+(X,Y)|=|XY(X-Y)|\le 2\|(X,Y)\|^3.$$
Thus $|F_+(X,Y)|\ge U$ implies $\|(X,Y)\|\gg U^{1/3}$. Therefore, for some absolute constants $c,C>0$,
$$\Omega_+(U,T)\subset\{(X,Y):cU^{1/3}\le \|(X,Y)\|\le CT^{1/2}\}.$$

If $CT^{1/2}<cU^{1/3}$, then $\Omega_+(U,T)$ is empty. Otherwise choose
$R_0$ with $R_0\le cU^{1/3}<2R_0$, put $R_j=2^jR_0$, and let $J$ be the
least non-negative integer such that $2R_J>CT^{1/2}$. Then the annuli
$$R_j\le \|(X,Y)\|<2R_j\qquad (0\le j\le J)$$
cover $\Omega_+(U,T)$, and $R_J\ll T^{1/2}$. Cover each annulus by four sectors of angular width $\pi/2$. The number of sectors is $O(1+\log(2+T))$, while their total area is at most the area of a disk of radius $2R_J$,
i.e., it is $\ll R_J^2\ll T$.
% $$\ll\sum_{j=0}^J R_j^2=R_J^2\sum_{m=0}^J4^{-m}\ll R_J^2\ll T.$$
Since $1+\log(2+T)\ll 1+T$, this proves \eqref{eq:model-sector-cover} for $\Omega_+(U,T)$.

\medskip
\noindent
\emph{The minus model.}
The reflection $(X,Y)\mapsto(Y,X)$ preserves $\Omega_-(U,T)$, Euclidean norm, area, and sectors. It is therefore enough to cover the part of $\Omega_-(U,T)$ where $|X|\ge |Y|$.

For each integer $k$, define
$$
\mathcal S_k=\{(X,Y)\in\Omega_-(U,T):|X|\ge |Y|,\ 2^k\le |X|<2^{k+1}\}.
$$
These sets form a dyadic decomposition of the part of $\Omega_-(U,T)$ where $|X|\ge |Y|$. Fix $k$ and write $L=2^k$, so that $L\le |X|<2L$ on $\mathcal S_k$.

For every $(X,Y)\in\mathcal S_k$,
$$
L\le \|(X,Y)\|\le \sqrt{2}|X|<2\sqrt{2}L<4L.
$$
Hence
$$
\mathcal S_k\subset\{(X,Y)\in \mathcal S_k:L\le \|(X,Y)\|<2L\}\cup\{(X,Y)\in\mathcal S_k:2L\le \|(X,Y)\|<4L\}.
$$
Thus, for a fixed $k$, only these two annuli need to be considered. We split
$\mathcal S_k$ according to whether $|X+Y|/|X|$ is at least $1/2$ or less than
$1/2$. These will be called the non-cancellation and cancellation cases,
respectively.

\medskip
\noindent
\emph{Non-cancellation.}
Suppose that $|X+Y|\ge |X|/2$, and put $s=Y/X$. Since $|Y|\le |X|$, we have $-1\le s\le 1$. Moreover,
$$
|X+Y|=|X|\,|1+s|.
$$
Hence the non-cancellation condition implies $1+s\ge 1/2$, or equivalently $s\ge -1/2$. Thus $s\in[-1/2,1]$, and on this interval
$$
X^3+Y^3=X^3(1+s^3),\qquad \frac{7}{8}\le 1+s^3\le 2.
$$
It follows that $|X^3+Y^3|\asymp |X|^3$. Since $L\le |X|<2L$ on $\mathcal S_k$, we obtain
$$
|X^3+Y^3|\asymp L^3.
$$
Points of $\Omega_-(U,T)$ also satisfy $U\le |X^3+Y^3|<2U$. Therefore, if the non-cancellation part of $\mathcal S_k$ is non-empty, then
$$
L^3\asymp U,
$$
or equivalently $L\asymp U^{1/3}$. Since $L=2^k$, this restricts $k$ to an interval of bounded length. Hence only $O(1)$ integers $k$ can contribute to the non-cancellation part.

The Hessian condition gives $X^2|s|=|XY|\le T$. Since $|X|\ge L$, every admissible slope lies in
$$
J_L=[-1/2,1]\cap[-T/L^2,T/L^2],\qquad |J_L|\ll\min\{1,T/L^2\}.
$$
For $X>0$, the direction is $\arctan s$; for $X<0$, it is the direction
obtained by adding $\pi$. Since $s\mapsto\arctan s$ is Lipschitz on
$[-1,1]$, the admissible directions lie in at most two angular intervals of
total length $O(\min\{1,T/L^2\})$. Intersecting them with the two annuli
above produces at most four sectors, with total area
$$
\ll L^2\min\{1,T/L^2\}\ll T.
$$
Since only $O(1)$ integers $k$ contribute, the whole non-cancellation part is covered by $O(1)$ sectors of total area $O(T)$.

\medskip
\noindent
\emph{Cancellation.}
Suppose that $|X+Y|<|X|/2$, and put $t=(X+Y)/X=1+Y/X$. Since $|Y|\le |X|$, we have $0\le t<1/2$. As $Y=X(t-1)$,
$$
|XY|=X^2(1-t)\asymp L^2,
$$
so the cancellation part of $\mathcal S_k$ can be non-empty only if $L^2\ll T$.

Furthermore,
$$
X^3+Y^3=X^3h(t),\qquad h(t)=(t-1)^3 + 1 = 3t-3t^2+t^3.
$$
Notice that $h$ is strictly increasing for $t>0$. Since $L\le |X|<2L$, every admissible value of $t$ lies in the interval
$$
K_L=\left\{0\le t<\frac{1}{2}:\frac{U}{8L^3}\le h(t)<\frac{2U}{L^3}\right\}.
$$
The image $h(K_L)$ has length $O(U/L^3)$, and $h'(t)\gg 1$ on $[0,1/2]$. Hence the mean value theorem gives
$$
|K_L|\ll U/L^3.
$$
For $X>0$, the direction is $\arctan(t-1)$; for $X<0$, it is the direction
obtained by adding $\pi$. Since $t\mapsto\arctan(t-1)$ is Lipschitz on
$[0,1/2]$, the admissible directions lie in at most two angular intervals of
total length $O(U/L^3)$. Intersecting them with the two annuli above produces
at most four sectors, with total area
$$
\ll L^2\frac{U}{L^3}=\frac{U}{L}.
$$

If the cancellation part of $\mathcal S_k$ is non-empty, then also $U\ll L^3$, because $0\le h(t)\le h(1/2)$. Together with $L^2\ll T$, this gives
$$
U^{1/3}\ll L\ll T^{1/2}.
$$
Since $L=2^k$, the contributing integers $k$ satisfy
$$
U^{1/3}\ll 2^k\ll T^{1/2}.
$$
Summing the sector areas over these integers gives
$$
\sum_k\frac{U}{2^k}\ll U^{2/3}.
$$
Indeed, this is a geometric series dominated by its first term, corresponding to $2^k\asymp U^{1/3}$. If the range is non-empty, then $U^{1/3}\ll T^{1/2}$, and hence $U^{2/3}\ll T$. Thus the total sector area in the cancellation part is $O(T)$.

The number of contributing integers $k$ is $O(1+\log(2+T))$, and each set $\mathcal S_k$ requires only $O(1)$ sectors. Hence the total number of sectors is $O(1+T)$.

Combining the two parts and applying the reflection $(X,Y)\mapsto(Y,X)$ proves \eqref{eq:model-sector-cover} for $\Omega_-(U,T)$.
\end{proof}

The next lemma counts primitive lattice points lying in the region where the cubic $F$ has size about $Q$ and its Hessian has size at most $M$. We use the convention that if $\vu=(u_1,u_2)\in\Lambda$, the expressions $F(\vu)$ and $H(\vu)$ below mean $F(u_1,u_2)$ and $H(u_1,u_2)$, respectively.

\begin{lemma}\label{lem:arch-slice}
Let
$$
        F(X,Y)=aX^3+3bX^2Y+3cXY^2+dY^3
$$
be a real binary cubic with nonzero discriminant $\Delta=\Disc(F)$, and let
$H=\Hess(F)$.  Assume $|\Delta|\ge1$.  Let $\Lambda\subset\R^2$ be a
unimodular lattice.  For
$Q\ge\max\{2,|\Delta|^{1/4}\}$ and $M\ge1$, let
$N_\Lambda(F;Q,M)$ be the number of primitive vectors $\vu\in\Lambda$ such
that
$$
        Q\le |F(\vu)|<2Q,
        \qquad |H(\vu)|\le M.
$$
Then
\begin{equation}\label{eq:arch-slice}
        N_\Lambda(F;Q,M)
        \ll 1+\frac{M}{|\Delta|^{1/2}}.
\end{equation}
The implied constant is absolute.
\end{lemma}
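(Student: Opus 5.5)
Our plan is to reduce $F$ over $\GL_2(\R)$ to one of the model cubics $F_\pm$ of Lemma~\ref{lem:model-sector-cover}, move the lattice $\Lambda$ along with it, and then apply Lemma~\ref{lem:sector-cover-count}.

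\emph{Step 1: normalise $F$.} A real binary cubic with $\Delta\neq0$ has either three distinct real linear factors ($\Delta>0$) or one real and two complex conjugate factors ($\Delta<0$). In the first case we send the three roots to $0,1,\infty$; in the second we send the real root to $-1$ and the complex pair to the primitive sixth roots of unity. This shows that there is $g\in\GL_2(\R)$ and a scalar $\kappa>0$ with
$$
g\cdot F=\pm\kappa F_\pm .
$$
Here the sign of $\Delta$ decides which model occurs.

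\emph{Step 2: track the invariants.} Write $D=|\det g|$. The discriminant of a binary cubic transforms by $\Disc(g\cdot F)=(\det g)^6\Disc(F)$ and scales by $\kappa^4$ under $F\mapsto\kappa F$. Hence
$$
|\Disc(F_\pm)|\,\kappa^4=D^6|\Delta|,
$$
and since $|\Disc(F_\pm)|$ is an absolute constant, $\kappa^4\asymp D^6|\Delta|$. By \eqref{eq:hessian-covariance}, and because the normalised Hessian is quadratic in the coefficients,
$$
\Hess(g\cdot F)=D^2\,g\cdot H=\kappa^2H_\pm .
$$
We may rescale $g$ by a positive scalar $t$, which multiplies $\kappa$ by $t^3$ and $D$ by $t^2$. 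This keeps $\kappa^4/D^6$ fixed and lets us choose $D=1$. With this choice $\kappa\asymp|\Delta|^{1/4}$, $g\cdot F=\pm\kappa F_\pm$, and $g\cdot H=\kappa^2H_\pm$.

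\emph{Step 3: transport the lattice.} The substitution $(X,Y)^T\mapsto g(X,Y)^T$ sends a point $w$ to $gw$, and $F(gw)=\pm\kappa F_\pm(w)$. So $\vu\in\Lambda$ corresponds to $w=g^{-1}\vu\in\Lambda':=g^{-1}\Lambda$. Because $|\det g|=1$, the lattice $\Lambda'$ is unimodular, and $g^{-1}$ maps primitive vectors to primitive vectors. The two conditions become
$$
Q/\kappa\le|F_\pm(w)|<2Q/\kappa,\qquad |H_\pm(w)|\le M/\kappa^2 .
$$
In the notation of Lemma~\ref{lem:model-sector-cover} this is the region $\Omega_\pm(U,T)$ with $U=Q/\kappa$ and $T=M/\kappa^2$. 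The lemma requires $U\ge1$, i.e.\ $Q\gg|\Delta|^{1/4}$, which is what the hypothesis $Q\ge|\Delta|^{1/4}$ gives. Where the implied constant in $\kappa\asymp|\Delta|^{1/4}$ interferes, we absorb it by a constant rescaling of $U$: this is possible because the proof of Lemma~\ref{lem:model-sector-cover} only used that $U$ is bounded below.

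\emph{Step 4: count.} Lemma~\ref{lem:model-sector-cover} covers $\Omega_\pm(U,T)$ by sectors with number plus total area $\ll1+T$. Lemma~\ref{lem:sector-cover-count} then bounds the primitive points of $\Lambda'$ in these sectors by $\ll 1+M/\kappa^2\ll 1+M|\Delta|^{-1/2}$, which is \eqref{eq:arch-slice}.

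The main obstacle is Step 2: the scaling bookkeeping has to give exactly $\kappa^2\asymp|\Delta|^{1/2}$ while $\det g=\pm1$, and the case $U$ slightly below $1$ has to be handled cleanly. If that case causes trouble, we would replace $\Omega_\pm(U,T)$ by a bounded union of dyadic ranges in $U$, each with $U\gg1$, and apply Lemma~\ref{lem:model-sector-cover} to each.
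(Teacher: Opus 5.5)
Your proposal is correct and follows the paper's proof essentially step by step. You reduce $F$ by a real $g$ with $|\det g|=1$ to $\kappa$ times a model cubic $F_\pm$, transport $\Lambda$ to another unimodular lattice, read off $U=Q/\kappa$ and $T=M/\kappa^2$ from discriminant and Hessian covariance, and apply Lemmas~\ref{lem:model-sector-cover} and~\ref{lem:sector-cover-count}. The loose end about $U<1$ never arises: since $\Disc(F_+)=1$ and $\Disc(F_-)=-27$, you get exactly $\kappa=(|\Delta|/|\Disc(F_\pm)|)^{1/4}\le|\Delta|^{1/4}\le Q$, so $U\ge1$ and neither rescaling nor dyadic splitting is needed.
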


\begin{proof}
We reduce $F$ to one of the two model cubics from Lemma~\ref{lem:model-sector-cover}. Their discriminants are
$$
\Disc(F_+)=1,\qquad \Disc(F_-)=-27.
$$
If $\Delta>0$, the three projective roots of $F$ are distinct and real, so
there exists a real projective transformation $g$ that sends them, in a chosen order, to the
three roots of $F_+$. If $\Delta<0$, $F$ has one real projective root and a
non-real conjugate pair of roots. In this case choose the projective transformation $g$ that sends the real root of $F$ to the real root of $F_-$ and
the conjugate pair to the conjugate pair of $F_-$ in the corresponding
order. Observe that $g$ commutes with complex conjugation and is therefore represented by a real matrix.

Choose a real matrix representing $g$, and rescale it so that $|\det g|=1$. We will also call this matrix $g$. Since $g$ sends the roots of
$F$ to those of the appropriate model form $F_\sigma$, $\sigma \in\{-,+\}$, both $F$ and $F_\sigma\circ g$ have the
same projective roots, with the same multiplicities, and therefore differ by
a nonzero scalar which we call $\kappa\in\RR\setminus\{0\}$. We therefore obtain the equation
\begin{equation}\label{eq:model-reduction}
        F=\kappa(F_\sigma\circ g).
\end{equation}
By well-known formulae that follow, for example, from~\eqref{eq:cubic-hess-disc} and~\eqref{eq:hessian-covariance}, one gets
$$
\Delta= \Disc(F) = \kappa^4\Disc(F_\sigma\circ g) = \kappa^4 (\det g)^6 \Disc(F_\sigma) = \kappa^4\Disc(F_\sigma),
$$
which immediately implies
\begin{equation}\label{eq:kappa-size}
        |\kappa|=
        \left(\frac{|\Delta|}{|\Disc(F_\sigma)|}\right)^{1/4}.
\end{equation}
Moreover, by Hessian covariance and $|\det g|=1$,
\begin{equation}\label{eq:model-hessian-reduction}
        H=\kappa^2(H_\sigma\circ g).
\end{equation}

The lattice $g\Lambda$ is unimodular, and $g$ preserves primitivity.  Hence,
 the change of variables $\vv=g\vu$ leads to $N_\Lambda(F;Q,M) = N_{g\Lambda}(F_\sigma;U,T)$ with parameters
$$
        U=\frac{Q}{|\kappa|},\qquad
        T=\frac{M}{\kappa^2}.
$$
Indeed, \eqref{eq:kappa-size} gives
$$
U=Q\frac{|\Disc(F_\sigma)|^{1/4}}{|\Delta|^{1/4}}\ge1,
$$
because $|\Disc(F_\sigma)|\in\{1,27\}$ and $Q\ge|\Delta|^{1/4}$.
Lemma~\ref{lem:model-sector-cover}, followed by Lemma~\ref{lem:sector-cover-count}, therefore gives
$$
        N_\Lambda(F;Q,M)\ll 1+T.
$$
Finally, $|\Disc(F_\sigma)|\in\{1,27\}$, so
$T\ll M|\Delta|^{-1/2}$.  This proves \eqref{eq:arch-slice}.
\end{proof}

The next lemma counts the forms satisfying the coefficient restrictions within a
fixed equivalence class.

\begin{lemma}\label{lem:class-slice}
Let $\mathcal C$ be a $\GL_2(\Z)$-equivalence class of primitive integral
binomially normalised binary cubic forms with nonzero discriminant $d$.
Let $I\Subset\R$ be a compact interval, let $M\ge1$, and assume
$Q\ge\max\{2,|d|^{1/4}\}$.  The number of forms
$$
        F(X,Y)=q_0X^3+3q_1X^2Y+3q_2XY^2+q_3Y^3
$$
in $\mathcal C$ satisfying
$$
        Q\le q_0<2Q,
        \qquad q_1/q_0\in I,
        \qquad \max_i|\Delta_i(\mathbf q)|\le M
$$
is
\begin{equation}\label{eq:class-slice}
        \ll_I 1+\frac{M}{|d|^{1/2}}.
\end{equation}
\end{lemma}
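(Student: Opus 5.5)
Fix a representative $F_0\in\mathcal C$. Every form in $\mathcal C$ is $F=g\cdot F_0$ for some $g\in\GL_2(\Z)$. The idea is to read the coefficients of $F$ as values of $F_0$ and of its Hessian at the first column of $g$, and then apply Lemma~\ref{lem:arch-slice}.

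Write $g=\begin{pmatrix}\alpha&\beta\\ \gamma&\delta\end{pmatrix}$ and $\vu=(\alpha,\gamma)$. Then $q_0=F(1,0)=F_0(\alpha,\gamma)=F_0(\vu)$, and $\vu$ is a primitive vector of $\Z^2$. By \eqref{eq:hessian-covariance} with $(\det g)^2=1$ we have $\Hess(F)=g\cdot\Hess(F_0)$. Its $X^2$-coefficient is $\Delta_0(\mathbf q)=\Hess(F_0)(\vu)$. Hence the conditions $Q\le q_0<2Q$ and $|\Delta_0(\mathbf q)|\le M$ say exactly that
$$
Q\le |F_0(\vu)|<2Q,\qquad |\Hess(F_0)(\vu)|\le M .
$$
Lemma~\ref{lem:arch-slice}, applied with $\Lambda=\Z^2$, $\Delta=\Disc(F_0)=d$ (an integer, so $|d|\ge1$) and the hypothesis $Q\ge\max\{2,|d|^{1/4}\}$, shows that the number of admissible first columns $\vu$ is $\ll 1+M/|d|^{1/2}$.

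It remains to bound the number of forms $F\in\mathcal C$ that share the same first column $\vu$. Two matrices $g,g'\in\GL_2(\Z)$ with the same first column satisfy $g'=g\begin{pmatrix}1&k\\0&\pm1\end{pmatrix}$ for some $k\in\Z$. So the forms with a given $\vu$ are $F_k=h_k\cdot F$ with $h_k=\begin{pmatrix}1&k\\0&\pm1\end{pmatrix}$, possibly also followed by $Y\mapsto -Y$. Under this substitution $q_0$ is unchanged and $q_1$ becomes $\pm(q_1+kq_0)$. The condition $q_1/q_0\in I$ therefore confines $k$ to an interval of length $|I|$, giving $O_I(1)$ choices of $k$ and of the sign. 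The stabiliser of $F_0$ in $\GL_2(\Z)$ can only shrink the count, since each form is then represented several times.

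Multiplying the two bounds gives \eqref{eq:class-slice}. The main point to check is that the Hessian condition transfers with the right normalisation, namely that the leading coefficient of $g\cdot\Hess(F_0)$ is literally $\Hess(F_0)(\vu)$, so that the constraint on $\Delta_0$ alone suffices. Only $\Delta_0$ is used; the bounds on $\Delta_1,\Delta_2$ are discarded.
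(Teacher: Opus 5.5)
Your proof is correct and follows the paper's argument. You read $q_0$ and $\Delta_0$ as $F_0(\vu)$ and $\Hess(F_0)(\vu)$ at the first column $\vu$, bound the admissible $\vu$ by Lemma~\ref{lem:arch-slice}, and use the shift $q_1/q_0\mapsto \pm q_1/q_0+k$ to get $O_I(1)$ second columns; the only cosmetic difference is that you work in $\GL_2(\Z)$ directly (via $(\det g)^2=1$ and the sign in the second column), whereas the paper first splits the class into at most two $\SL_2(\Z)$-classes.
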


\begin{proof}
Every form in the class is produced by at least one matrix in
$\GL_2(\Z)$. Different matrices may produce the same form because of the
stabiliser, so counting matrices gives an upper bound. Since
$\SL_2(\Z)$ has index two in $\GL_2(\Z)$, a $\GL_2(\Z)$-class is the
union of at most two  $\SL_2(\Z)$-classes. It therefore
suffices to treat one $\SL_2(\Z)$-class. Choose a representative $F_0$ and write
$H_0=\Hess(F_0)$. Every form in the class is of the form
$$
        F=F_0(\alpha X+\beta Y,\gamma X+\delta Y),
        \qquad
        g=\begin{pmatrix}\alpha&\beta\\ \gamma&\delta\end{pmatrix}
        \in\SL_2(\Z).
$$
Let $\vu=(\alpha,\gamma)$ be the first column of $g$. Setting $Y=0$ and
using the homogeneity of $F_0$ gives $q_0=F_0(\vu)$. Moreover, by
\eqref{eq:hessian-covariance} and $\det g=1$, the coefficient of $X^2$ in
$\Hess(F)$ is $H_0(\vu)$. Thus
$$
q_0=F_0(\vu),\qquad \Delta_0=H_0(\vu).
$$
Consequently, the stated conditions imply
$$
        Q\le |F_0(\vu)|<2Q,
        \qquad |H_0(\vu)|\le M.
$$
The vector $\vu$ is primitive, so Lemma~\ref{lem:arch-slice} gives
$O(1+M/|d|^{1/2})$ possibilities for $\vu$.

Fix such a first column and choose one $\vv_0=(b_0,d_0)\in\Z^2$ with
$\det(\vu,\vv_0)=1$.  Every other possible second column is
$\vv=\vv_0+n\vu$ with $n\in\Z$.  Replacing $\vv_0$ by $\vv_0+n\vu$ changes the form to
$$
        F_0(\vu(X+nY)+\vv_0Y).
$$
Thus $q_0$ is unchanged, while $q_1$ is replaced by $q_1+nq_0$.
Consequently $q_1/q_0$ is replaced by $q_1/q_0+n$. Since this ratio must
lie in the fixed compact interval $I$, only $O_I(1)$ values of $n$ are
possible. This proves \eqref{eq:class-slice}.
\end{proof}

We now sum the fixed-class estimate over the irreducible and reducible classes, using Theorem~\ref{thm:davenport}.

\begin{proposition}\label{prop:nondeg-count}
Let $I\Subset\R$ and $C_0\ge1$.  Uniformly for $Q\ge2$ and
$1\le M\le C_0Q$, the number of primitive $\mathbf q\in\Z^4$ such
that
$$
Q\le q_0<2Q,
\qquad q_1/q_0\in I,
\qquad \max_i|\Delta_i(\mathbf q)|\le M,
\qquad \Delta_1^2-4\Delta_0\Delta_2\neq0
$$
is $O_{I,C_0}(M^2)$.
\end{proposition}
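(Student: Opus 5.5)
The count is obtained by sorting the admissible forms into $\GL_2(\Z)$-classes, applying Lemma~\ref{lem:class-slice} to each class, and summing over classes with Theorem~\ref{thm:davenport}.

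\textbf{Bounding the discriminant.} For an admissible primitive $\mathbf q$, let $F=F_{\mathbf q}$ and $d=\Disc(F)$. By \eqref{eq:cubic-hess-disc},
$$
d=-27(\Delta_1^2-4\Delta_0\Delta_2),
$$
so $d\neq0$ and $|d|\le 135M^2$. Moreover $d$ is a nonzero integer, hence $|d|\ge1$.

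\textbf{Checking the hypothesis of Lemma~\ref{lem:class-slice}.} That lemma requires $Q\ge\max\{2,|d|^{1/4}\}$. We have $|d|^{1/4}\ll M^{1/2}\le (C_0Q)^{1/2}$, which is $\le Q$ once $Q$ exceeds a constant depending on $C_0$. For the boundedly many smaller $Q$ the count is trivially $O_{I,C_0}(1)$: the height bound $\Ht(F_{\mathbf q})\asymp_I Q$ from the start of the section confines $\mathbf q$ to a bounded box. This is the one place the condition $M\le C_0Q$ is used.

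\textbf{Counting within a class.} Each admissible $F$ is primitive, since $\mathbf q$ is, and $\GL_2(\Z)$ preserves primitivity. So each $F$ lies in a unique class $\mathcal C$ of primitive forms with discriminant $d$, where $1\le |d|\le 135M^2$. By Lemma~\ref{lem:class-slice}, class $\mathcal C$ contributes $\ll_I 1+M|d|^{-1/2}$ admissible forms.

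\textbf{Summing over classes.} Theorem~\ref{thm:davenport} gives $h(D):=\#\{\text{classes with }0<|d|\le D\}=O(D)$. The $O(1)$ term contributes $h(135M^2)\ll M^2$. For the other term, decompose dyadically, $2^{k}\le|d|<2^{k+1}$. The classes in the $k$-th block contribute
$$
\ll M\,2^{-k/2}\cdot 2^{k+1}\ll M\,2^{k/2}.
$$
Summing over $2^k\le 135M^2$ gives a geometric series of size $\ll M\cdot M=M^2$. Equivalently, partial summation gives $\sum_{|d|\le D}h'(d)|d|^{-1/2}\ll D^{1/2}$ with $D\asymp M^2$. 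Hence the total is $O_{I,C_0}(M^2)$.

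\textbf{Main obstacle.} The only delicate point is the range condition $Q\ge|d|^{1/4}$, which Lemma~\ref{lem:arch-slice} needs so that $U\ge1$ in the model region; it is secured by $M\le C_0Q$ as shown above. The remaining risk is bookkeeping: Theorem~\ref{thm:davenport} counts classes of all integral cubics, and we restrict to the binomially normalised primitive sublattice. The paper has already observed that passing to this $\GL_2(\Z)$-invariant sublattice can only decrease the number of classes, so the bound applies.
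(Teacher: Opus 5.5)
Your proof is correct and follows the paper's argument essentially step for step. You bound $0<|\Disc(F)|\ll M^2$ via \eqref{eq:cubic-hess-disc}, secure $Q\ge|d|^{1/4}$ for $Q\gg_{C_0}1$ from $M\le C_0Q$ (disposing of bounded $Q$ trivially), apply Lemma~\ref{lem:class-slice} class by class, and sum dyadically with Theorem~\ref{thm:davenport} to get $\sum_D(D+MD^{1/2})\ll M^2$; the only cosmetic difference is that you bound the $O(1)$-per-class term by the total class count at once rather than block by block.
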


\begin{proof}
Let $F=F_{\mathbf q}$. Since the Hessian coefficients satisfy
$$
\max_{0\le i\le2}|\Delta_i(\mathbf q)|\le M
$$
and
$$
\Delta_1^2-4\Delta_0\Delta_2\ne0,
$$
the discriminant of the Hessian is a nonzero integer of absolute value
$O(M^2)$. Hence, by~\eqref{eq:cubic-hess-disc},
$$
0<|\Disc(F)|\ll M^2.
$$

Theorem~\ref{thm:davenport} shows that
the number of relevant irreducible and reducible classes with
$D<|\Disc(F)|\le2D$ is $O(D)$.

For a class of discriminant $d$ in this dyadic block, Lemma~\ref{lem:class-slice} limits its contribution by
$$
        \ll_I 1+\frac{M}{|d|^{1/2}}
        \ll 1+\frac{M}{D^{1/2}}.
$$
The hypothesis $Q\ge |d|^{1/4}$ required in that lemma holds once $Q$
exceeds a constant depending only on $C_0$, because
$|d|\ll M^2\le C_0^2Q^2$. If $Q$ is below that constant, then
$Q$ and $M$ are bounded in terms of $C_0$, and in view of $|q_1|, |q_2|, |q_3|\ll_I q_0\ll Q$, only $O_{I,C_0}(1)$ such forms occur. Hence, the dyadic
block contributes
$$
        \ll_I D+MD^{1/2}.
$$
Summing over dyadic $D\ll M^2$ gives $O_{I,C_0}(M^2)$, as required.
\end{proof}

\subsection{Proof of Theorem~\ref{th2}}

We first formulate the estimate that follows from the previous section.  For $Q>0$ and $0<\delta<1/2$, let
\begin{align*}
P_g^{\rm prim}(Q,\delta):=
\{\mathbf p\in\ZZ^4_{\rm prim}:\, Q\le p_0<2Q,\,
\mathbf p\text{ is generic},\inf_{x\in I}\max_{1\le i\le3}|p_0x^i-p_i|\le\delta\},
\end{align*}
and put $N_g^{\rm prim}(Q,\delta):=\#P_g^{\rm prim}(Q,\delta)$.

\begin{lemma}\label{lem:primitive-generic-count}
Uniformly for $Q>0$ and $0<\delta<1/2$,
\begin{equation}\label{eq:primitive-generic-count}
N_g^{\rm prim}(Q,\delta)\ll_I (1+Q)^2\delta^2.
\end{equation}
In particular, for $Q\ge1$ the right-hand side is $O_I(Q^2\delta^2)$.
\end{lemma}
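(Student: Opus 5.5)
The plan is to reduce directly to Proposition~\ref{prop:nondeg-count} by means of Lemma~\ref{lem:approx-window}, with a separate treatment of the degenerate regime where $Q\delta$ is small. Take $\vp\in P_g^{\rm prim}(Q,\delta)$, so that $|p_0x^i-p_i|\le\delta$ for some $x\in I$ (the infimum is attained because $I$ is compact). Then $p_1/p_0$ lies within $\delta/p_0\le 1/2$ of $I$, so $p_1/p_0\in I'$, where $I'$ is a fixed compact enlargement of $I$. Lemma~\ref{lem:approx-window}, applied with $E=\delta$ on $I'$, gives
$$
\max_i|\Delta_i(\vp)|\le C(Q\delta+\delta^2)\le C'(1+Q)\delta.
$$
Since $\vp$ is generic, Proposition~\ref{prop:special-hessian-correspondence}(i) yields $\Delta_1^2-4\Delta_0\Delta_2\neq 0$. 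In particular the $\Delta_i$ are integers and not all zero.

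Consider first the case $Q\delta\ge c$ for a suitable small constant $c$, and assume also $Q\ge 2$. Set $M=C'(1+Q)\delta$. Then $M\ge 1$ after adjusting constants, because $M\gg Q\delta\ge c$ and we may round $M$ up to $\max\{1,M\}\ll Q\delta$. Moreover $M\le C'(1+Q)/2\le C_0Q$. Proposition~\ref{prop:nondeg-count}, applied with $I'$, then bounds the count by $O(M^2)=O(Q^2\delta^2)$. When $Q<2$, only $p_0=1$ and boundedly many $\vp$ can occur. In that situation the genericity condition forces some $|\Delta_i|\ge1$, whence $\delta\gg 1$ and the trivial bound $O(1)\ll(1+Q)^2\delta^2$ suffices.

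The remaining case is $Q\delta<c$, and here I expect the main point to be showing that no points are counted at all. Since $\max|\Delta_i|\le C'(1+Q)\delta$, choosing $c$ small enough (with $Q\ge1$) gives $\max|\Delta_i|<1$. Integrality then forces $\Delta_0=\Delta_1=\Delta_2=0$, which contradicts $\Delta_1^2-4\Delta_0\Delta_2\ne0$. Hence $N_g^{\rm prim}=0$ in this regime.

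For $Q<1$ the set is empty, since $p_0\ge1$ is incompatible with $p_0<2Q$ unless $Q>1/2$. If $1/2<Q<1$, the same integrality argument applies, using $|\Delta_i|\ll\delta$ when $\delta$ is small. If $\delta\gg1$, boundedly many points occur and the bound holds trivially. Combining the cases gives \eqref{eq:primitive-generic-count}. The only delicate step is organizing the thresholds so that the hypotheses $M\ge1$ and $M\le C_0Q$ of Proposition~\ref{prop:nondeg-count} hold whenever the count is nonzero. The integrality dichotomy ``$|\Delta_i|<1\Rightarrow$ special'' is what removes the $+1$ terms that would otherwise spoil the bound $Q^2\delta^2$.
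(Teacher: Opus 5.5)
Your proposal is correct and follows the paper's proof: Lemma~\ref{lem:approx-window} bounds the $\Delta_i$ by $\ll (1+Q)\delta$, and integrality plus genericity forces this bound to be at least $1$ whenever a point is counted. Your $Q\delta\ge c$ dichotomy is the paper's ``$A\ge1$'' observation, after which Proposition~\ref{prop:nondeg-count} gives $O(Q^2\delta^2)$ for $Q\ge2$. For $Q<2$ you reuse the integrality argument quantitatively, whereas the paper appeals to the positive minimum distance of the finitely many generic candidates from the curve; one small slip is that $Q<2$ allows $p_0\in\{1,2,3\}$, not only $p_0=1$, which does not affect the boundedness argument.
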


\begin{proof}
Assume first that $Q\ge2$, and choose a compact interval $I^+$ whose
interior contains the closed $1$-neighbourhood of $I$.  Let
$\mathbf p\in P_g^{\rm prim}(Q,\delta)$, and choose $x\in I$ such that
$$
\max_{1\le i\le3}|p_0x^i-p_i|\le\delta.
$$
The first approximation inequality gives
$$
\left|\frac{p_1}{p_0}-x\right|\le\frac{\delta}{Q},
$$
so $p_1/p_0\in I^+$.  By Lemma~\ref{lem:approx-window},
$$
\max_{0\le i\le2}|\Delta_i(\mathbf p)|
\le C_{I^+}(Q\delta+\delta^2)=:A.
$$
As $\mathbf p$ is generic, Proposition~\ref{prop:special-hessian-correspondence} implies
$$
\Delta_1^2-4\Delta_0\Delta_2\ne0.
$$
Since the $\Delta_i(\mathbf p)$ are integers, the vector $\vp$ can exist only
if $A\ge1$.  We may therefore put $M=\lceil A\rceil$, in which case
$$
M\le2A\ll_I Q\delta\ll Q.
$$
Proposition~\ref{prop:nondeg-count} now gives
$$
N_g^{\rm prim}(Q,\delta)\ll_I M^2\ll_I Q^2\delta^2.
$$

It remains to consider $Q<2$.  Then $p_0$ can take only finitely many
values.  Since $x$ lies in a fixed compact interval and $\delta<1/2$, only
finitely many integer vectors $\mathbf p$ can occur.  None of the generic
ones lies exactly on the Veronese curve, and hence the distance of this finite
set from the curve has a positive minimum. Thus the set is empty for
sufficiently small $\delta$, while for the remaining $\delta$ the claimed bound
follows after enlarging the implied constant, if needed.
\end{proof}

We next remove the primitivity restriction.  Let $N_g^{\rm dyad}(Q,\delta)$
denote the same count as $N_g^{\rm prim}(Q,\delta)$, but with
$\mathbf p\in\ZZ^4$ arbitrary rather than primitive.

\begin{lemma}\label{lem:remove-primitivity-count}
Uniformly for $Q>0$ and $0<\delta<1/2$,
\begin{equation}\label{eq:all-generic-dyadic}
N_g^{\rm dyad}(Q,\delta)\ll_I (1+Q)^2\delta^2.
\end{equation}
In particular, for $Q\ge1$ the right-hand side is $O_I(Q^2\delta^2)$.
\end{lemma}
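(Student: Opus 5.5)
We write every $\vp$ counted by $N_g^{\rm dyad}(Q,\delta)$ as $\vp=k\vp'$, where $k=\gcd(p_0,\dots,p_3)\ge1$ and $\vp'$ is primitive. Then we sum the primitive bound of Lemma~\ref{lem:primitive-generic-count} over $k$.

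First we check that the decomposition respects all the conditions.
\begin{itemize}[label=$\cdot$]
\item Since $p_0>0$, we have $p_0'>0$ and $Q/k\le p_0'<2Q/k$.
\item Genericity is preserved. By Proposition~\ref{prop:special-hessian-correspondence}(i) it is equivalent to $\Disc(F_\vp)\ne0$, and $\Disc(F_{k\vp'})=k^4\Disc(F_{\vp'})$. Alternatively, the subspaces $L_{P_{u,v}}$ are linear, so $\vp$ is special if and only if $\vp'$ is.
\item The approximation condition improves. If $\max_i|p_0x^i-p_i|\le\delta$, then $\max_i|p_0'x^i-p_i'|\le\delta/k$.
\end{itemize}
Hence $\vp'\in P_g^{\rm prim}(Q/k,\delta/k)$, and since $k$ is determined by $\vp$,
$$
N_g^{\rm dyad}(Q,\delta)\le\sum_{k\ge1}N_g^{\rm prim}(Q/k,\delta/k).
$$

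Lemma~\ref{lem:primitive-generic-count} holds uniformly for all $Q>0$ and $0<\delta<1/2$, and $\delta/k<1/2$. So each term is $\ll_I(1+Q/k)^2\delta^2/k^2$. Only $k\le 2Q$ can contribute, because we need $p_0'\ge1$.

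We split the sum at $k=Q$.
\begin{itemize}[label=$\cdot$]
\item For $k\le Q$, each term is $\ll Q^2\delta^2/k^4$, and the sum of these is $\ll Q^2\delta^2$.
\item For $Q<k\le 2Q$, each term is $\ll\delta^2/k^2$. The sum over $k>Q$ is $\ll\delta^2\min\{1,1/Q\}$, which is $\ll(1+Q)^2\delta^2$.
\end{itemize}
Adding the two ranges gives $N_g^{\rm dyad}(Q,\delta)\ll_I(1+Q)^2\delta^2$. For $Q\ge1$ this is $O_I(Q^2\delta^2)$.

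The only delicate point is the range of large $k$, where $Q/k<1$. There the primitive bound is used with $Q/k$ below $1$, which is why Lemma~\ref{lem:primitive-generic-count} was stated for all $Q>0$ with the factor $(1+Q)^2$. Even so, the resulting sum is harmless: it is bounded by the tail $\sum_{k>Q}k^{-2}$ multiplied by $\delta^2$, as computed above.
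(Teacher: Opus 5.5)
Your proposal is correct and follows the paper's proof. You write $\vp=k\vp'$ with $k$ the gcd, check that genericity, the dyadic range and the approximation condition transform correctly, and sum Lemma~\ref{lem:primitive-generic-count} over $k<2Q$. The only difference is bookkeeping: you split the sum at $k=Q$, whereas the paper directly bounds $\delta^2\sum_{d}\left(d^{-2}+Q^2d^{-4}\right)$.
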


\begin{proof}
Write every counted vector uniquely as $\mathbf p=d\mathbf q$, where
$d=\gcd(p_0,p_1,p_2,p_3)$ and $\mathbf q$ is primitive.  Multiplication by
$d$ multiplies the associated cubic by $d$ and its discriminant by $d^4$,
hence the property of a point being generic is preserved.  Moreover,
$$
Q\le p_0< 2Q,
\qquad
\inf_{x\in I}\max_i|p_0x^i-p_i|\le\delta
$$
imply
$$
\frac Qd\le q_0<\frac{2Q}{d},
\qquad
\inf_{x\in I}\max_i|q_0x^i-q_i|\le\frac\delta d.
$$
Consequently, Lemma~\ref{lem:primitive-generic-count} gives
\begin{align*}
N_g^{\rm dyad}(Q,\delta)
&\le\sum_{1\le d<2Q}
N_g^{\rm prim}\left(\frac Qd,\frac\delta d\right)\\
&\ll_I\sum_{1\le d<2Q}
\left(1+\frac Qd\right)^2\frac{\delta^2}{d^2}\\
&\ll_I\delta^2\sum_{d\ge1}\left(\frac1{d^2}+\frac{Q^2}{d^4}\right)
\ll_I (1+Q)^2\delta^2.
\end{align*}
Thus, passing from primitive to arbitrary generic points only changes the
implied constant in the upper bound.
\end{proof}

Finally, we deduce Theorem~\ref{th2}.

\begin{proof}
Cover the denominator range $1\le p_0\le Q$ with dyadic intervals
$$
2^{-j}Q\le p_0<2^{-j+1}Q.
\qquad (j\ge0),
$$
stopping when the lower endpoint is below $1$.  Applying
Lemma~\ref{lem:remove-primitivity-count} to each block gives
$$
N_{g,\VVV_3}(Q,\delta)\ll_I\delta^2\sum_j\bigl(1+2^{-j}Q\bigr)^2\ll_I Q^2\delta^2.
$$
This proves~\eqref{th2_eq}.
\end{proof}

\section{The upper bound for generic points}\label{sec:upper}

% We first state the reduction to primitive points in a form that preserves
% the property of being generic or special.

% \begin{lemma}\label{lem:primitive-reduction}
% Let $\lambda\ge0$, and suppose that an irrational number $x$ admits
% infinitely many approximating vectors from one fixed class, either generic or
% special.  Then it admits infinitely many primitive approximating vectors from
% the same class, with unbounded first coordinate.
% \end{lemma}

% \begin{proof}
% For every approximating vector in the chosen class, divide by
% $d=\gcd(q_0,q_1,q_2,q_3)$ and write $\mathbf q'=d^{-1}\mathbf q$. Then
% $$
% |q_0'x^i-q_i'|
% =d^{-1}|q_0x^i-q_i|
% <d^{-1-\lambda}(q_0')^{-\lambda}
% \le(q_0')^{-\lambda}.
% $$
% The associated cubic is multiplied by $d^{-1}$, so its discriminant is
% multiplied by $d^{-4}$.  Thus both the nonzero-discriminant class and the
% zero-discriminant class are preserved.  If the first
% coordinates $q_0'$ were bounded, only finitely many primitive vectors could
% occur. One of them would arise with multipliers $d\to\infty$, forcing
% $q_0'x=q_1'$ and hence $x\in\QQ$, a contradiction.
% \end{proof}

We now prove Corollary~\ref{cor:generic-upper}.

\begin{proof}
It is enough to work on a
compact interval $I\Subset\RR$.  For a dyadic number $Q=2^k$, let
$\mathcal P_Q$ be the set of generic integer vectors satisfying
\begin{equation}\label{eq2}
Q\le p_0<2Q,
\qquad
\inf_{x\in I}\max_{1\le i\le3}|p_0x^i-p_i|\le Q^{-\lambda}.
\end{equation}
For all sufficiently large $Q$, Lemma~\ref{lem:remove-primitivity-count} gives
\begin{equation}\label{eq:generic-dyadic-count}
\#\mathcal P_Q
\ll_I Q^{2-2\lambda}.
\end{equation}
One can easily check that for each $\mathbf p\in\mathcal P_Q$, the set of $x\in I$ satisfying $$\max_{1\le i\le3}|p_0x^i-p_i|<p_0^{-\lambda}$$ is contained in an interval of length $O(Q^{-1-\lambda})$.  Hence the sum of the $s$-powers of all covering
intervals is bounded by
$$
\sum_{Q=2^k}Q^{2-2\lambda-s(1+\lambda)}.
$$
If $\frac13\le\lambda\le 1$, this converges whenever $s>\frac{2-2\lambda}{1+\lambda}$. Then the Hausdorff-Cantelli lemma implies $\dim S_g(I,\lambda) \le \frac{2-2\lambda}{1+\lambda}$. If $\lambda> 1$, the condition~\eqref{eq:generic-dyadic-count} implies that the set $\PPP_Q$ is empty for large enough $Q$.  Therefore
$$
S_g(I,\lambda) = \emptyset \quad\mbox{and}\quad \dim S_g(I,\lambda) = 0
$$
as claimed.
\end{proof}

\section{Special points: lower bound for the Hausdorff dimension}\label{sec:special-lower}

By $\PPP_s(\lambda)$ we denote the set of all special
$\vp\in\ZZ_{>0}\times\ZZ^3$ such that there exists $\xi\in\RR$
for which~\eqref{main_ineq} is satisfied. Define
$\RRR(\vp)=\RRR(\vp,\lambda)$ to be the set of $\xi\in\RR$
that satisfy~\eqref{main_ineq} for a given $\vp$.
%$$
%Q_s(
%$$
%
%By $Q(\lambda) = Q_n(\lambda)$ we denote the set of all $\vq\in
%\ZZ^{n+1}$ such that there exists $x\in \RR$ such that
%\begin{equation}\label{def_qlambda}
%\max_{1\le i\le n} |q_0x^i - q_i| < ||\vq||_\infty^{-\lambda}.
%\end{equation}
%Sometimes it is convenient to write $\vq$ as a pair $(q_0,
%\vq^+)\in\ZZ\times\ZZ^n$. Then~\eqref{def_qlambda} can be rewritten
%as $||q_0\vv(x) - \vq^+||_\infty \ll ||\vq||^{-\lambda}$. By
%$R(\vq,\lambda) = R_n(\vq,\lambda)$ we denote the set of $x\in\QQ$
%that satisfy~\eqref{def_qlambda} for a given $\vq\in Q(\lambda)$.
%
%
%
%
%\begin{proposition}
%Let
%$$
%S^*(\lambda):= \{x\in\RR: x\in R(\vq, \lambda)\;\mbox{ for i.m. }\;
%\vq\in Q^*(\lambda)\}.
%$$
%Then for $\lambda\ge 3/5$, $\dim W^*(\lambda) \ge
%\frac{2-2\lambda}{1+\lambda}$.
%\end{proposition}
%
%Notice that $S^*(\lambda)\subset S(\lambda)$ therefore together with
%~\cite{beresnevich_2012}, the proposition immediately implies that
%$\dim S(\lambda)\ge \frac{2-2\lambda}{1+\lambda}$ for all
%$\lambda\ge 1/3$.
%Yes. The lower bound comes from building a Cantor subset using only
%one carefully chosen dyadic range of (r_k)�s.
Consider the following points which lie on $L_{P_{u,v}}$ and hence
are special:
\begin{equation}\label{def_p12}
\vq_1:= (v^3, uv^2, u^2v, u^3), \qquad \vq_2:= \left(v^2r_0,
v(ur_0+1), u(ur_0+2), \frac{u^2(ur_0+3)}{v}\right),
\end{equation}
where $0\le r_0<v$ is such that $v\mid ur_0+3$. For $k\in \ZZ$
define $\vp(k):= k\vq_1 + \vq_2$ and $r_k:= r_0 + kv$. Clearly, all
the points $\vp(k)$ are special. For the lower bound of the Hausdorff dimension, we only consider reduced
fractions $u/v$ such that $0\le u/v\le1$ and $v>0$.

From~\eqref{def_la}, direct computations show that the height of $L_{P_{u,v}}$ equals
$$
H(L_{P_{u,v}}) = \big\|(u^2,-2uv,v^2,0)\wedge(0,u^2,-2uv,v^2)\big\|_2.
$$
One can verify that $H(L_{P_{u,v}})=\|\vq_1\wedge\vq_2\|_2$, and therefore the lattice $L_{P_{u,v}}\cap\ZZ^4$ is generated by $\vq_1$ and $\vq_2$.
\begin{lemma}\label{lem1}
Assume $0\le u/v\le1$ and let $\lambda<1$. There exists a constant $c>0$ such that if $k\ge
cv^{\frac{3\lambda-1}{1-\lambda}}$ then $\vp(k)\in \PPP_s(\lambda)$.
\end{lemma}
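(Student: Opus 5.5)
The plan is to exhibit an explicit $\xi$, namely the point $u/v$ shifted by a small amount along the tangent direction, and to check \eqref{main_ineq} for $\vp(k)$ directly. Since the goal is only to verify $\vp(k)\in\PPP_s(\lambda)$, no counting or covering argument is needed.

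\emph{Step 1: errors at $\xi=u/v$.} Write $x=u/v$ and $r_k=r_0+kv$. From \eqref{def_p12},
$$
p_0=v^2r_k,\quad p_1=v(ur_k+1),\quad p_2=u(ur_k+2),\quad p_3=\frac{u^2(ur_k+3)}{v}.
$$
Substituting, the errors at $\xi=x$ are
$$
p_0x^i-p_i=-v,\ -2u,\ -\frac{3u^2}{v}\qquad (i=1,2,3),
$$
that is,
$$
p_0x^i-p_i=-v\,\frac{d}{dx}x^i .
$$
This expresses the fact that $\vp(k)$ lies on the tangent plane at $\vq_1$: the residual error points exactly in the tangent direction of $\VVV_3$.

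\emph{Step 2: cancel the first-order term.} Take $\xi=x+t$ with $p_0t=v$, i.e. $t=1/(vr_k)$. By the binomial expansion,
$$
p_0\xi^i-p_i=(p_0t-v)\,i x^{i-1}+p_0\sum_{j\ge2}\binom ij x^{i-j}t^j .
$$
The first term vanishes by the choice of $t$, and the remainder is exact. Concretely, it is $0$ for $i=1$, $p_0t^2$ for $i=2$, and $3xp_0t^2+p_0t^3$ for $i=3$. Since $0\le x\le1$ and $0<t\le1$, all three errors are at most $4p_0t^2=4/r_k$.

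\emph{Step 3: compare with $p_0^{-\lambda}$.} It now suffices to have
$$
\frac{4}{r_k}<(v^2r_k)^{-\lambda},
$$
which is equivalent to
$$
r_k^{1-\lambda}>4v^{2\lambda},
$$
and, since $\lambda<1$, to
$$
r_k>4^{1/(1-\lambda)}v^{2\lambda/(1-\lambda)}.
$$
As $r_k\ge kv$, it is enough that
$$
k\ge 4^{1/(1-\lambda)}\,v^{\frac{2\lambda}{1-\lambda}-1}=c\,v^{\frac{3\lambda-1}{1-\lambda}},
$$
which is exactly the hypothesis of the lemma with $c=4^{1/(1-\lambda)}$. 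This gives $p_0>0$, so $\vp(k)\in\ZZ_{>0}\times\ZZ^3$, and $\vp(k)$ is special because it lies in $L_{P_{u,v}}$. Hence $\vp(k)\in\PPP_s(\lambda)$.

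The only step that is more than bookkeeping is Step 1. The key observation there is that the error vector at $u/v$ is a multiple of the tangent vector $(1,2x,3x^2)$, so a single shift $t$ removes it to first order and leaves an error of size only $1/r_k$. If the algebra were to show a non-tangential component, one would instead optimise $t$ to minimise the maximum error, but the explicit formulas in \eqref{def_p12} make the cancellation exact.
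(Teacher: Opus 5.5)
Your proof is correct and is essentially the paper's argument. Your $\xi=u/v+1/(vr_k)$ is exactly $p_1/p_0$, at which the paper likewise computes the errors $0$, $1/r_k$ and $(3ur_k+1)/(vr_k^2)$ and compares them with $p_0^{-\lambda}$. The only nit is that your last step, from $k\ge c\,v^{\frac{3\lambda-1}{1-\lambda}}$, gives only $4/r_k\le p_0^{-\lambda}$ rather than the strict inequality required in \eqref{main_ineq}. This is harmless: the $i=3$ error $p_0t^2(3x+t)$ is strictly below $4p_0t^2$ once $t<1$, or you can simply enlarge $c$.
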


\begin{proof} We write $\vp(k) = (p_0, p_1, p_2, p_3)$.

Simple calculations show that
\begin{equation}\label{eq3}
\left|p_0 \cdot\left(\frac{p_1}{p_0}\right)^2 - p_2\right| =
\frac{1}{|r_k|}; \qquad\left|p_0\cdot\left(\frac{p_1}{p_0}\right)^3
- p_3\right| = \frac{|3ur_k+1|}{vr_k^2}\ll \frac{1}{|r_k|}.
\end{equation}

Since $0\le r_0<v$ and $k\ge1$, we have $r_k\asymp vk$ and
$$
\max\left\{\frac1{r_k},\frac{|3ur_k+1|}{vr_k^2}\right\}\ll\frac1{r_k}.
$$
Moreover, if
$k\ge cv^{\frac{3\lambda-1}{1-\lambda}}$, then
$$
\frac{r_k^{1-\lambda}}{v^{2\lambda}}\gg c^{1-\lambda}\quad \Longleftrightarrow \quad \frac{1}{r_k}\ll \frac{1}{c^{1-\lambda}(v^2r_k)^\lambda} = c^{\lambda-1} p_0^{-\lambda}.
$$
Thus, by choosing $c$ sufficiently large, both nonzero errors in~\eqref{eq3}
are less than $\frac14 p_0^{-\lambda}$. This proves
$\vp(k)\in \PPP_s(\lambda)$.
\end{proof}

In the following construction we assume that
$\frac35\le\lambda<1$. For the point $\vp(k)$ one has $x_0:=\frac{p_1}{p_0}=\frac{u}{v}+\frac{1}{vr_k}$.
By the proof of Lemma~\ref{lem1}, the errors at $x_0$ in the second
and third coordinates are at most $\frac14p_0^{-\lambda}$.
Since $x_0$ stays in a fixed bounded interval, the mean value theorem
shows that there exists $c_1>0$ such that
$$
\RRR(\vp(k))\supset B\left(\frac{u}{v}+\frac{1}{vr_k}, \frac{c_1}{(v^2r_k)^{1+\lambda}}\right).
$$
% In further arguments we assume that $\lambda<1$. Notice that for the point $\vp(k)$ one has $\frac{p_1}{p_0} =
% \frac{u}{v} + \frac{1}{vr_k}$. Hence there exists a constant $c_1>0$
% such that as soon as $k$ satisfies Lemma~\ref{lem1},
% $$
% \RRR(\vp(k)) \supset B\left(\frac{u}{v}+\frac{1}{vr_k},
% \frac{c_1}{(v^2r_k)^{1+\lambda}}\right).
% $$
Denote the interval on the right-hand side by $R(u,v,k)$. Now, the key
idea of the proof is to compute the lower bound for the Hausdorff
dimension of such $\xi\in\RR$ that fall inside infinitely many
intervals $R(u,v,k)$. Notice that these intervals form clusters
around each rational number $u/v$.

For convenience, define
$\alpha=\frac{3\lambda-1}{1-\lambda}$ and fix a constant
$K>c\,2^\alpha$. For fixed parameters $V>1$ and
$\gamma\ge\alpha$, consider the set of intervals
$$
\vR(\lambda,V,\gamma):=
\{R(u,v,k):0\le u/v\le1,\ \gcd(u,v)=1,\ V\le v<2V,\
KV^\gamma\le k<2KV^\gamma\}.
$$
By the choice of $\gamma$ and Lemma~\ref{lem1}, the union of intervals from $\vR(\lambda, V, \gamma)$ is a subset of $\bigcup_{\vp\in \PPP_s(\lambda)} \RRR(\vp)$. Also,
by convention we define $\vR(\lambda,1,\gamma):= \{[0,1]\}$. The
number of distinct pairs $(u,v)$ with $\gcd(u,v)=1$ in this block is
$\asymp V^2$. Moreover, for each fixed $(u,v)$ the number of distinct $k$ for $R(u,v,k)\in \vR(\lambda, V,\gamma)$ is $\asymp V^\gamma$. So the total number of intervals in $\vR(\lambda,V,\gamma)$ is
$\asymp V^{\gamma+2}$.

Each interval in $\vR(\lambda, V,\gamma)$ has radius
$$
\asymp (v^3k)^{-1-\lambda} \asymp V^{-(\gamma+3)(1+\lambda)} =:
\rho_V.
$$
Notice that intervals with the same pair $u,v$ form a cluster of
length $ (v^2k)^{-1} \asymp V^{-2-\gamma} =: L_V$.

%There are (\asymp V^2) such clusters.
%
Consider two consecutive intervals $R(u,v,k)$ and $R(u,v,k+1)$ from
$\vR(\lambda, V,\gamma)$. The distance between their centres is
$$
\frac{1}{vr_k} - \frac{1}{vr_{k+1}} = \frac{1}{r_kr_{k+1}} \gg
V^{-2-2\gamma}.
$$
Notice that for $\gamma< \frac{1+3\lambda}{1-\lambda}$ we have
$$
V^{-2-2\gamma} > V^{-(3+\gamma)(1+\lambda)} \asymp \rho_V,
$$
therefore these two intervals are disjoint. In the rest of the proof
we assume this upper bound on $\gamma$. Also, the distance between
the centres of two clusters is $\gg V^{-2}$, which is larger
than $L_V$. The conclusion is that all intervals in $\vR(\lambda,
V,\gamma)$ are disjoint.

Choose a rapidly increasing sequence $V_n$ such that $V_1=1$ and,
for each $n\ge 1$ and $R(u,v,k)\in \vR(\lambda,V_n,\gamma)$, the
number of rationals $u^+/v^+$ with $V_{n+1}\le v^+<2V_{n+1}$ in
the middle third of $R(u,v,k)$ is
$$
\asymp V_{n+1}^2|R(u,v,k)|.
$$
We choose $V_{n+1}$ sufficiently large so that the whole cluster
associated with each such $u^+/v^+$ is contained in $R(u,v,k)$.
Such a choice is possible since these rational numbers are
equidistributed and the corresponding clusters have length
$O(L_{V_{n+1}})\to0$. Thus each element of $\vR(\lambda,V_n,\gamma)$ contains $\asymp V_{n+1}^2|R(u,v,k)|$ clusters and $\asymp V_{n+1}^{2+\gamma}|R(u,v,k)|$ intervals from $\vR(\lambda,V_{n+1},\gamma)$. Notice that by
construction,
$$
\KKK(\lambda, \gamma):= \bigcap_{n=1}^\infty \III(\lambda, V_n,
\gamma) := \bigcap_{n=1}^\infty \bigcup_{I\in \vR^* (\lambda, V_n,
\gamma)} I
$$
is a subset of $S_s(\lambda)$. Here, $\III(\lambda,1,\gamma)$ is the
interval $[0,1]$ and $\vR^{*}(\lambda, V_{n+1}, \gamma)$ consists of
all $R(u,v,k)\in \vR(\lambda,V_{n+1}, \gamma)$ which are contained in $\III(\lambda,V_n,\gamma)$.

We construct a measure $\mu$ supported on $\KKK(\lambda,\gamma)$ and
then apply the mass distribution principle to get the lower bound for
$$
\dim\big(\limsup\limits_{n\to\infty} \vR(\lambda,V_n,\gamma)\big)\le
\dim S_s(\lambda).
$$
We set $\mu([0,1]):=1$. For each $n\ge1$ we equally distribute the measure of each retained level-$n$ interval among all $R(u^+,v^+,k^+)\in\vR(\lambda,V_{n+1},\gamma)$ contained in it. This gives
$$
\mu(R(u^+,v^+,k^+))\asymp \frac{\mu(R(u,v,k))}{V_{n+1}^{2+\gamma}
|R(u,v,k)|}.
$$
%The support of this measure is a Cantor set
%$$
%\KKK(\lambda, \gamma):= \bigcap_{n=1}^\infty \III(\lambda, V_n,
%\gamma) := \bigcap_{n=1}^\infty \bigcup_{I\in Q^{**} (\lambda, V_n,
%\gamma)} I,
%$$
%where $\III(\lambda,1,\gamma)$ is the interval $[0,1]$ and
%$Q^{**}(\lambda, V_{n+1}, \gamma)$ consists of all $R(u,v,k)\in
%Q^*(\lambda,V_{n+1}, \gamma)$ which belong to $\III(\lambda, V_n,
%\gamma)$.
Also, the length of each interval in $\vR(\lambda,V_n,\gamma)$ is
$\asymp \rho_{V_n} =: \rho_n$ and the length of each cluster at
level $n$ is $\asymp L_{V_n} =: L_n$. Recall,
$$
\rho_n = V_n^{-(\gamma+3)(1+\lambda)},\quad L_n = V_n^{-2-\gamma}.
$$
Fix
$$
0<s<s'<\min\left\{\frac{2}{2+\gamma}, \frac{\gamma+2}{(\gamma+3)(1+\lambda)}\right\}.
$$

Consider an arbitrary interval $I$ intersecting $\KKK(\lambda,\gamma)$
with $0<|I|<1$. Then there exists $n\in\NN$ such that
$$
\rho_{n+1}\le |I|<\rho_n.
$$
For such $n$, the interval $I$ intersects $O(1)$ intervals
$J\in\vR^*(\lambda,V_n,\gamma)$. Fix one such $J$ and estimate
$\mu(I\cap J)/\mu(J)$.

{\bf Case 1.} Let $|I|\ge V_{n+1}^{-2}$. In this case $I$ intersects
at most $5|I|V^2_{n+1}$ clusters from $\vR^*(\lambda, V_{n+1},
\gamma)$. Each cluster contains $V_{n+1}^\gamma$ intervals, hence
$$
\#\{J^+\in \vR^*(\lambda, V_{n+1}, \gamma): J^+\cap I\neq\emptyset\} \ll
|I|V_{n+1}^{\gamma+2}.
$$
Therefore
$$
\frac{\mu(I\cap J)}{\mu(J)}\ll\frac{|I|V_{n+1}^{\gamma+2}}{|J|V_{n+1}^{\gamma+2}}=\frac{|I|}{|J|}.
$$
Hence, for some constant $C_1\ge1$ independent of $n$,
$$
\frac{\mu(I\cap J)}{\mu(J)}\le C_1\left(\frac{|I|}{|J|}\right)^{s'}.
$$

{\bf Case 2.} Let $L_{n+1}\le |I|<V_{n+1}^{-2}$. In this case, $I$
intersects $O(1)$ clusters. Therefore
$$
\#\{J^+\in \vR^*(\lambda, V_{n+1}, \gamma): J^+\cap I\neq\emptyset\} \ll
V_{n+1}^\gamma.
$$
Therefore
$$
\frac{\mu(I\cap J)}{\mu(J)} \ll
\frac{V_{n+1}^{\gamma}}{|J|V_{n+1}^{\gamma+2}} =
\frac{1}{|J|V_{n+1}^2}.
$$
Since $s'<\frac{2}{2+\gamma}$, by choosing $V_{n+1}$ sufficiently large in comparison with $V_n$, we obtain
$$
|J|^{s'-1}V_{n+1}^{-2}\ll V_{n+1}^{-s'(2+\gamma)}=L_{n+1}^{s'}.
$$
Consequently, for some constant $C_2\ge1$ independent of $n$,
$$
\frac{\mu(I\cap J)}{\mu(J)}\le C_2\left(\frac{L_{n+1}}{|J|}\right)^{s'} \le C_2\left(\frac{|I|}{|J|}\right)^{s'}.
$$

{\bf Case 3.} $\rho_{n+1}\le |I|<L_{n+1}$. In this case, $I$
intersects only one cluster. As we have shown, the centres of
intervals in the cluster are separated by a distance $\asymp
V_{n+1}^{-2\gamma - 2}$. Therefore
$$
\#\{J^+\in \vR^*(\lambda, V_{n+1}, \gamma): J^+\cap I\neq\emptyset\} \ll
|I|V_{n+1}^{2\gamma+2} + 1.
$$
Hence
$$
\frac{\mu(I\cap J)}{\mu(J)} \ll \frac{|I|V_{n+1}^\gamma}{|J|} +
\frac{1}{|J|V_{n+1}^{2+\gamma}}.
$$
We estimate the two terms separately. For the first term, since $s'<\frac{2}{2+\gamma}$,  by choosing $V_{n+1}$ sufficiently large in comparison with $V_n$,
we have
$$
|J|^{s'-1}V_{n+1}^{\gamma} \ll V_{n+1}^{(2+\gamma)(1-s')} = L_{n+1}^{s'-1}.
$$
Since $|I|<L_{n+1}$ and $s'-1<0$, we have
$$
L_{n+1}^{s'-1}\le |I|^{s'-1}.
$$
Therefore
$$
\frac{|I|V_{n+1}^{\gamma}}{|J|} \ll \left(\frac{|I|}{|J|}\right)^{s'}.
$$

For the second term, since $s'<\frac{\gamma+2}{(\gamma+3)(1+\lambda)}$,
by choosing $V_{n+1}$ sufficiently large in comparison with $V_n$, we have
$$
|J|^{s'-1}V_{n+1}^{-2-\gamma} \ll V_{n+1}^{-s'(\gamma+3)(1+\lambda)}= \rho_{n+1}^{s'}.
$$
Therefore
$$
\frac{1}{|J|V_{n+1}^{2+\gamma}} \ll \left(\frac{\rho_{n+1}}{|J|}\right)^{s'} \le \left(\frac{|I|}{|J|}\right)^{s'},
$$
where in the last inequality we used $\rho_{n+1}\le|I|$.
Combining the two terms, we obtain
$$
\frac{\mu(I\cap J)}{\mu(J)}\ll \left(\frac{|I|}{|J|}\right)^{s'}.
$$

Finally, combining the three cases, we obtain, for some constant
$C>0$ independent of $n$,
\begin{equation}\label{eq1}
\frac{\mu(I\cap J)}{\mu(J)}
\le C\left(\frac{|I|}{|J|}\right)^{s'},
\qquad
s'<\min\left\{\frac{2}{2+\gamma},
\frac{\gamma+2}{(\gamma+3)(1+\lambda)}\right\}.
\end{equation}
Iterating~\eqref{eq1} along the nested construction and summing over
the $O(1)$ level-$n$ intervals intersecting $I$, we obtain
$$
\mu(I)\ll C^n|I|^{s'}.
$$
Fix $s<s'$. Since the sequence $(V_n)$ may be chosen sufficiently
rapidly increasing, we may assume that $C^n\rho_n^{s'-s}\ll1$.
As $|I|<\rho_n$, it follows that $\mu(I)\ll |I|^s$. Hence, by the mass distribution principle, we get $\dim\KKK(\lambda,\gamma)\ge s$.
Letting $s<s'$ tend to
$$
\min\left\{\frac{2}{2+\gamma},
\frac{\gamma+2}{(\gamma+3)(1+\lambda)}\right\}
$$
gives
$$
\dim\KKK(\lambda,\gamma)\ge
\min\left\{\frac{2}{2+\gamma},
\frac{\gamma+2}{(\gamma+3)(1+\lambda)}\right\}.
$$
Notice that the first term in the
minimum monotonically decreases with $\gamma$ while the second one
monotonically increases. Also notice that for $\frac35\le\lambda<1$
and $\gamma=\alpha = \frac{3\lambda-1}{1-\lambda}$, the first term
is not larger than the second one. Therefore, for a fixed $\frac35\le\lambda<1$, the minimum in~\eqref{eq1} is maximised for
$\gamma=\alpha$ and it equals $\frac{2-2\lambda}{1+\lambda}$.
Finally, this gives
$$
\dim S_s(\lambda) \ge \dim \KKK(\lambda,
\alpha) \ge \frac{2-2\lambda}{1+\lambda}.
$$
Notice that this inequality is also true for $\lambda\ge1$, but in this case it becomes trivial.

The points $\vq_1$ of the form $(v^3,v^2u,vu^2,u^3)$ belong to $\VVV_3$ and therefore clearly belong to $\PPP_s(\lambda)$. It was shown in~\cite{bad_bug_2020} that the set of $\xi$ that lie in infinitely many neighbourhoods $\RRR(\vq_1)$ of such points has Hausdorff dimension $\frac{2}{3(1+\lambda)}$. We deduce that
$$
\dim S_s(\lambda) \ge \frac{2}{3(1+\lambda)}.
$$
The last two inequalities confirm the lower bound of Theorem~\ref{th3}.

\section{Special points: upper bound for the Hausdorff dimension}\label{sec:special-upper}

It is enough to prove the required upper bound on
$S_s(\lambda)\cap I$ for an arbitrary compact interval
$I\Subset\RR$. We fix such an $I$ throughout this section and
suppress it from the notation.

As was mentioned before, the lattice of integer points on
$L_{P_{u,v}}$ is generated by two vectors $\vq_1$ and $\vq_2$ which are defined by~\eqref{def_p12}.
Consider $\vp = \vp(k_1,k_2) := k_1\vq_1 + k_2\vq_2$. For convenience, denote $D := k_1v + k_2r_0$. One can easily compute that
$$
\vp = \left(v^2D, v(uD + k_2), u^2D + 2k_2u, \frac{u^3D + 3k_2u^2}{v}\right).
$$
Since $p_0=v^2D>0$, we have $D>0$.

Let $\PPP_s(\lambda) = \PPP_s^1(\lambda)\cup \PPP_s^2(\lambda)$ where $\PPP_s^1$ contains special vectors with $k_2\neq 0$ or, equivalently, $\Hess(F_{\vp})\not\equiv0$, and $\PPP_s^2$ comes from $k_2=0$ or, equivalently, $\Disc(F_\vp)=0$ and $\Hess(F_\vp)\equiv0$. Then the set $S_s(\lambda)$ splits correspondingly: $S_s(\lambda) = S_s^1(\lambda) \cup S_s^2(\lambda)$.

First assume that $\frac35\le\lambda<1$ and consider
$\vp\in \PPP_s^1(\lambda)$ with $\RRR(\vp)\cap I\ne\emptyset$.
Then Lemma~\ref{lem:approx-window} gives
$$
|p_1^2 - p_0p_2| \ll p_0^{1-\lambda} \quad \Longleftrightarrow\quad
|k_2^2v^2|\ll (v^2D)^{1-\lambda} \quad \Longleftrightarrow\quad (|k_2|v)^{\frac{2}{1-\lambda}}\ll v^2 D.
$$
%Notice that if $k_2r_0 \gg D$ then the last inequality transforms to
% $$
% (k_2v^2r_0)^{1-\lambda} \gg k_2^2v^2\quad\Longleftrightarrow\quad r_0^{1-\lambda}\gg v^{2\lambda} k_2^{1+2\lambda}
% $$
% which is not possible due to $r_0\le v$. Therefore,
% the term $k_1v$ dominates in $D$. Then we have
% $$
% k_2^2v^2 \ll (k_1v^3)^{1-\lambda}\quad \Longleftrightarrow\quad k_1
% \gg k_2^{\frac{2}{1-\lambda}}v^{\frac{3\lambda-1}{1-\lambda}}.
% $$
We now compute
$$
\left|\frac{p_1}{p_0} - \frac{u}{v}\right| \ll \frac{|k_2|v}{v^2D}\ll (|k_2|v)^{-\frac{1+\lambda}{1-\lambda}}.
$$
Hence, for all points $\vp\in \PPP^1_s(\lambda)\cap L_{P_{u,v}}$ and $\xi\in\RRR(\vp)\cap I$,
$$
|v\xi - u|\ll v\left|\xi - \frac{p_1}{p_0}\right| + \left|v\frac{p_1}{p_0}-u\right|\ll
v^{1-\frac{1+\lambda}{1-\lambda}}.
$$
A fixed reduced fraction $u/v$ can occur only finitely often; otherwise $\xi=u/v$, contradicting $|p_0\xi-p_1|=|k_2|v\ge1$.
Hence Jarník's theorem gives
$$
\dim S^1_s(\lambda) \le \frac{2(1-\lambda)}{1+\lambda}.
$$
If $\lambda>1$, one can note that
$$
S_s^1(\lambda)=\emptyset,
$$
while for $\lambda=1$, $\dim S_s^1(1)=0$, so $S_s^1$ does not contribute nontrivially to the dimension of $S_s$ for $\lambda\ge1$.

Now consider the case $k_2=0$. In this case, $\vp=k_1 \vq_1 = k_1(v^3,v^2u,vu^2,u^3)$, and therefore $\vp\in\VVV_3$. This case is established, for example, in~\cite{schleischitz_2016}. But since the arguments are short, we present the proof here.

Notice that $p_0=k_1v^3$ and $\frac{p_1}{p_0}=\frac{u}{v}$. Suppose that $\vp\in \PPP_s^2(\lambda)$ or, in other words, there exists $\xi\in I$ such that
$$
|p_0\xi-p_1|<p_0^{-\lambda}.
$$
Dividing by $p_0$, we obtain
$$
\left|\xi-\frac{u}{v}\right|=\left|\xi-\frac{p_1}{p_0}\right|<p_0^{-1-\lambda}=(k_1v^3)^{-1-\lambda}\le v^{-3(1+\lambda)}.
$$
Discarding a countable set of rational $\xi$, infinitely many distinct
reduced fractions $u/v$ occur, and hence Jarník's theorem gives
$$
\dim S_s^2(\lambda)\le \frac{2}{3(1+\lambda)}.
$$
Combining the two cases, we get
$$
\dim S_{s}(\lambda) \le \max\left\{\frac{2-2\lambda}{1+\lambda},
\frac{2}{3(1+\lambda)}\right\}.
$$
Thus, Theorem~\ref{th3} is proved.

\bibliographystyle{abbrv}
%%%
\bibliography{bibliog}

% \bigskip
% \noindent Dzmitry Badziahin\\ \noindent The University of Sydney\\
% \noindent Camperdown 2006, NSW (Australia)\\
% \noindent {\tt dzmitry.badziahin@sydney.edu.au}

\vspace{2em}

\noindent
\begin{tabular}{@{}l}
  Dmitry Badziahin \\
  The University of Sydney \\
  Camperdown 2006, NSW (Australia) \\
  \texttt{dzmitry.badziahin@sydney.edu.au}
\end{tabular}

\vspace{1.5em}

\noindent
\begin{tabular}{@{}l}
  Nikita Shulga \\
  SMRI, The University of Sydney \\
  Camperdown 2006, NSW (Australia) \\
  \texttt{nikita.shulga@sydney.edu.au}
\end{tabular}

\end{document}